\documentclass{article}
\usepackage[english]{babel}
\usepackage[T1]{fontenc}
\usepackage[utf8]{inputenc}
\usepackage{graphicx}
\usepackage{hyperref}
\usepackage[table,xcdraw]{xcolor}
\usepackage{lmodern}
\usepackage{amsmath, amssymb, mathtools, amsthm, thm-restate}
\usepackage{booktabs}
\usepackage{xspace}
\newtheorem{problem}{Problem}
\newtheorem{theorem}{Theorem}
\newtheorem{lemma}[theorem]{Lemma}
\newtheorem{observation}[theorem]{Observation}
\newtheorem{corollary}[theorem]{Corollary}

\newcommand{\ra}{\xrightarrow[]{}}
\newcommand{\nauty}{{\tt nauty}\xspace}
\newcommand{\geng}{{\tt geng}\xspace}
\newcommand{\hogid}[1]{\href{https://houseofgraphs.org/graphs/#1}{\texttt{HoG<#1>}}}
\newcommand{\undercite}[2]{\underset{\text{\tiny\cite{#1}}}{#2}}
\newcommand{\underlem}[2]{\underset{\text{\tiny Lem.\ \ref{#1}}}{#2}}

\newcommand{\llsym}{\bowtie}
\newcommand{\circsym}{\circledast}
\newcommand{\extsym}{\boxplus}
\newcommand{\modsym}{\dagger}
\newcommand{\filtersym}{\star}

\newcommand{\ptt}{\overline{P_2 \cup P_3}}

\DeclareMathOperator{\Aut}{Aut}

\definecolor{myblue}{HTML}{DAE8FC}
\definecolor{mygray}{HTML}{A8A8A8}
\definecolor{myorange}{HTML}{ffcb2f}
\definecolor{mygreen}{HTML}{00d300}
\definecolor{mywhite}{HTML}{ffffff}

\usepackage{tikz}
\tikzset{unlabeled_vertex/.style={inner sep=2.5pt, outer sep=0pt, circle, fill}}
\tikzset{edge_color0/.style={color=black,line width=1.2pt,opacity=0.5}}
\tikzset{edge_color1/.style={color=red,  line width=1.2pt,opacity=1}} 
\tikzset{edge_color2/.style={color=blue, line width=1.2pt,opacity=1}}
\tikzset{edge_color3/.style={color=green!80!black,line width=1.2pt,opacity=1}}

\title{On Small Folkman Graphs Arrowing $K_2$ or $K_3$}
\author{
Zohair Raza Hassan\\[-0.2ex]
\small Department of Computer Science, \\[-0.6ex] {\small \tt zh5337@rit.edu}\\[-0.6ex]
\small Rochester Institute of Technology, Rochester, NY 14623, USA\\[1.0ex]\and
Stanis{\l}aw Radziszowski\\[-0.3ex]
\small Department of Computer Science, \\[-0.6ex] {\small \tt spr@cs.rit.edu}\\[-0.6ex]
\small Rochester Institute of Technology, Rochester, NY 14623, USA\\[1.0ex]\and
Steven Van Overberghe\\[-0.2ex]
\small Department of Mathematics, Computer Science and Statistics, \\[-0.6ex] {\small \tt Steven.VanOverberghe@ugent.be}\\[-0.6ex]
\small Krijgslaan 299, 9000 Gent, Belgium\\[1.0ex]
}

\date{}

\begin{document}

\maketitle

\begin{abstract}
For a graph $G$ and integers $a_i \geq 1$, we say that $G \ra (a_1, \ldots, a_k)^v$ if in any $k$-coloring of $G$'s vertices there exists a monochromatic $a_i$-clique for some color $i \in \{1,\ldots,k\}$.
$G \ra (a_1, \ldots, a_k)^e$ is defined similarly, but for edge colorings. 
The Folkman number $F_v(a_1, \ldots, a_k; H)$ is the smallest number of vertices for which an $H$-free graph arrowing $(a_1, \ldots, a_k)^v$ exists. 
$F_e(a_1, \ldots, a_k; H)$ is defined similarly for edge-arrowing. 

In this work, we present new bounds for Folkman numbers where $a_i \in \{2,3\}$ and $k \leq 4$, while avoiding $K_n$, $J_n$, for $n \in \{4,5,6\}$, where $K_n$ is the complete graph on $n$ vertices and $J_n$ is $K_n$ missing an edge. We also present results for $C_4$-free and $W_5$-free graphs, where $C_4$ is the cycle on four vertices and $W_5$ is the wheel graph on five vertices. 
Notably, we prove the existence of $F_e(3,3;W_5)$, leaving only one graph, $\ptt$, on five vertices for which the existence problem of $F_e(3,3;H)$ remains open. We provide some theoretical results that should aid in uncovering the existence of $F_v(3,3;\ptt)$.

Our new bounds are the result of a variety of methods involving filters, extension, semi-polycirculant graphs, locally linear graphs, and the modification of special graphs. 
Most of our bounds are from the semi-polycirculant graph generator, showcasing its efficacy for finding witness Folkman graphs. 
\end{abstract}

\section{Introduction}

\subsection{Notation and Terminology}

\noindent \textbf{Graphs.} All graphs discussed in this work are simple and undirected. The vertex and edge sets of a graph $G$ are denoted by $V(G)$ and $E(G)$, respectively. As is standard, we use $\alpha(G)$ to denote the order of the largest independent set in $G$, and $\chi(G)$ to denote its chromatic number. $K_n$, $C_n$, and $W_n$ refer to the complete, cycle, and wheel graphs on $n$ vertices. We use $J_n$ to denote the complete graph on $n$ vertices missing an edge, $K_n - e$.

\noindent \textbf{Arrowing.} 
Let $a_1, a_2, \ldots, a_k$ be positive integers, and let $G$ be a graph.  
We say that $G \ra (a_1,  \ldots, a_k)^v$ if any vertex-coloring of $G$ with $k$ colors has a monochromatic copy of $K_{a_i}$ for some color $i \in \{1,2,\ldots,k\}$. Note that the `$v$' in the superscript denotes the fact that we are coloring vertices. Edge arrowing is defined similarly: 
$G \ra (a_1,  \ldots, a_k)^e$ if any edge-coloring of $G$ with $k$ colors has a monochromatic copy of $K_{a_i}$ for some color $i \in \{1,2,\ldots,k\}$.

\noindent \textbf{Ramsey and Folkman Numbers.} For positive integers $a_1,  \ldots a_k$, the Ramsey number $R(a_1,  \ldots, a_k)$ is the order of the smallest complete graph, $n$, such that $K_n \ra (a_1,  \ldots, a_k)^e$. 
Folkman numbers are generalizations of Ramsey numbers, wherein we also avoid a specified subgraph. 

The vertex Folkman number
$F_v(a_1,  \ldots, a_k; H)$ is the smallest number of vertices of any graph $G$ such that $G$ is $H$-free and $G \ra (a_1,  \ldots, a_k)^v$. 
Edge Folkman numbers, $F_e(a_1,  \ldots, a_k; H)$, are defined similarly.
The set of Folkman graphs
$\mathcal{F}_v(a_1,  \ldots, a_k; H; n)$ is the set of all graphs $G$ on $n$ vertices that are $H$-free and arrow $(a_1, \ldots, a_k)^v$. The set $\mathcal{F}_e(a_1,  \ldots, a_k; H; n)$ is defined similarly for edge arrowing.

As is standard, we may use exponents to state that the same graph is being arrowed in multiple colors: e.g., $F_v(2^3 ; 4)$ is equivalent to $F_v(2,2,2;4)$.

Finally, we define some terminology which will be useful in describing our witness graphs. An $H$-free graph $G$ is called maximal $H$-free if the addition of any edge creates a subgraph $H$ in $G$. 
A graph $G$ for which $G \ra (a_1, \ldots, a_k)$ is called minimal if after the
deletion of any edge this arrowing does not hold. 
If $G$ is maximal and minimal,
it is referred to as bicritical.

\subsection{Background and Prior Work}

Ramsey numbers have been studied extensively since their introduction in 1930, with a special focus on Ramsey numbers for two colors. 
We refer the reader to a survey by Radziszowski~\cite{ds1} for an extensive overview of the subject. 
Folkman numbers have garnered lesser but still significant interest.
Unlike Ramsey numbers, Folkman numbers are not guaranteed to exist for all combinations of graphs in the argument; for example, there is no $J_4$-free graph $G$ such that $G \ra (3,3)^e$ and thus $F_e(3,3;J_4)$ does not exist---this is easy to see after observing that each edge belongs to at most one $K_3$ in any $J_4$-free graph. 

The most recent work on finding new vertex Folkman numbers was by Hassan et al., who explored Folkman numbers for $J_4$-free graphs~\cite{HassanJNRX23}. They showed the existence of $F_v(3^k;J_4)$ for $k \geq 3$ and provided exact values and bounds for several numbers of the form $F_v(2^i, 3^j; J_4)$.
Complete graphs are the most explored class within the study of Folkman numbers~\cite{Folk70,Nen84,Nen00,Nen03,LRU01,Nen09,Fv445,Nen07,FizGM2013a}. 
Some notable examples are: (1) triangle-free Folkman numbers, particularly  $F_v(2^k;3)$ which correspond to the smallest order of a triangle-free graph with chromatic number greater than $k$~\cite{DBLP:journals/jgt/Goedgebeur20}, and (2) $F_e(3,3;4)$ the value of which has eluded researchers for decades; the best known bounds are $21 \leq F_e(3,3;4) \leq 786$~\cite{BikovN20,lange2014MaxCut}.

Folkman numbers of the form $F_v(a_1, \ldots, a_k;K_s)$ have been studied extensively. 
It is easy to see that finding these numbers becomes more difficult as $s$ decreases, but special values of $a_1, \ldots, a_k$ and $s$ give rise to interesting problems as shown below based on the values $m := 1 + \sum_{i=1}^k (a_i-1)$ and $p := \max(a_1, \ldots, a_k)$. We mention two results relevant to this work as they will be useful later:

\begin{lemma}[\cite{luczak1996note,luczak2001minimal}]
\label{l:k_m-equality}
        If $m \geq p+1$, then
    $F_v(a_1, \ldots, a_k ; K_m) = m+p$.
\end{lemma}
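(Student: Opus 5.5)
The plan is to prove the two inequalities separately. The upper bound comes from an explicit $K_m$-free graph on $m+p$ vertices that arrows $(a_1,\ldots,a_k)^v$. The lower bound needs a coloring argument showing that no $K_m$-free graph on at most $m+p-1$ vertices arrows. Throughout, set $d := m-p-1 \ge 0$, which is where the hypothesis $m \ge p+1$ enters, and assume $p \ge 2$.

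\textbf{Upper bound.} Take $G = K_{d} + \overline{C_{2p+1}}$, the join of a clique with the complement of an odd cycle. It has $d + 2p+1 = m+p$ vertices. Since $\omega(\overline{C_{2p+1}}) = p$, we get $\omega(G) = d+p = m-1$, so $G$ is $K_m$-free.

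For arrowing, suppose a $k$-coloring has no monochromatic $K_{a_i}$ in any color $i$. Let $t_i$ be the number of vertices of $K_d$ in color $i$, and let $S_i$ be the color-$i$ part of $\overline{C_{2p+1}}$. Because of the join, $t_i + \omega(G[S_i]) \le a_i - 1$. Summing over $i$ gives $\sum_i \omega(G[S_i]) \le \sum_i (a_i-1) - d = p$. There are now two cases.

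\begin{itemize}
\item If some $S_i$ is all of $\overline{C_{2p+1}}$, then $\omega(G[S_i]) = p > a_i - 1$, a contradiction.
\item Otherwise every $S_i$ induces a proper induced subgraph of $\overline{C_{2p+1}}$, which is the complement of a disjoint union of paths and hence perfect. So $\chi(G[S_i]) = \omega(G[S_i])$, and therefore $p+1 = \chi(\overline{C_{2p+1}}) \le \sum_i \chi(G[S_i]) \le p$, again a contradiction.
\end{itemize}

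(A first attempt with $K_d + C_{2p+1}$ fails: coloring the whole cycle with a single color $i$ with $a_i \ge 3$ avoids every monochromatic clique. The complement of the cycle is what makes the clique numbers add up correctly.)

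\textbf{Lower bound.} Let $G$ be $K_m$-free with $n \le m+p-1$ vertices. The goal is a partition $V(G) = S_1 \cup \cdots \cup S_k$ with $\omega(G[S_i]) \le a_i - 1$. I would argue by induction on $\sum_i (a_i - 1)$, ordering the colors so that $a_1 = p$. The inductive step looks for a set $S_1$ with $\omega(G[S_1]) \le a_1 - 1$ such that $G - S_1$ satisfies the hypothesis for the remaining parameters $(a_2,\ldots,a_k)$. That means $\omega(G - S_1) \le m - a_1$ and $|V(G - S_1)| \le (m - a_1 + 1) + p' - 1$, where $p' = \max_{i \ge 2} a_i$.

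The natural choice is to let $S_1$ be a set of small clique number that meets every maximum clique of $G$ as often as possible. The counting would use the fact that, on so few vertices, the complement $\overline{G}$ has no independent set of size $m$. This is a strong constraint, and it should force the maximum cliques of $G$ to overlap heavily. The base cases are those where $m-p-1$ is small or $k=2$. These I would handle directly via the structure of graphs whose clique number is close to their order.

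\textbf{Main obstacle.} I expect the lower bound to be the hard step, specifically controlling the vertex count after removing $S_1$. The induction only closes if $S_1$ is large enough, roughly $|S_1| \ge a_1 + (p - p')$ vertices, while keeping $\omega(G[S_1]) \le a_1 - 1$. When this fails, $G$ must be close to an extremal structure such as $K_{d} + \overline{C_{2p+1}}$ minus a vertex. In that case I would analyze the complement $\overline{G}$ directly, showing it is close to a disjoint union of short paths and cycles, and then color explicitly. This is the delicate part of the Łuczak--Ruciński--Urbański argument cited for this lemma, and I would follow their structure there.
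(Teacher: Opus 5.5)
Your upper bound is correct and self-contained. $K_{m-p-1}+\overline{C_{2p+1}}$ is the standard extremal graph. The estimate $\sum_i\omega(G[S_i])\le p$, together with perfection of proper induced subgraphs of $\overline{C_{2p+1}}$ and $\chi(\overline{C_{2p+1}})=p+1$, closes the argument. The paper itself only cites the literature for this lemma.

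The gap is the lower bound, which you do not prove. Your induction removes the whole largest color at once. That requires a set $S_1$ with $\omega(G[S_1])\le p-1$ that also lowers $\omega$ by $p-1$ and still has about $2p-1-p'$ vertices. Nothing in your sketch shows such a set exists. Phrases like ``should force the maximum cliques to overlap heavily'' and ``I would follow their structure there'' defer exactly the content of the theorem. The base case $k=2$ is likewise left unargued.

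The missing idea is the classical lemma (due to Hajnal): if $\omega(G)>|V(G)|/2$, then all maximum cliques of $G$ share a vertex. With it you can peel off one unit of a non-maximal color at a time, instead of a whole color. Fix $a_1=p$ and some $j\neq 1$ with $a_j\ge 2$, and induct on $m$.

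Suppose $m\ge p+2$ and $G$ is $K_m$-free with $|V(G)|\le m+p-1\le 2m-3$. There are two cases.
\begin{itemize}
\item If $\omega(G)\le m-2$, delete any vertex $v$.
\item If $\omega(G)=m-1>|V(G)|/2$, delete a vertex $v$ lying in every maximum clique.
\end{itemize}
In both cases $G-v$ is $K_{m-1}$-free on at most $(m-1)+p-1$ vertices. By induction it has a good coloring for $(a_1,\dots,a_j-1,\dots,a_k)$, since the maximum is still $p$ and $m-1\ge p+1$. Adding $v$ to color $j$ raises that color's clique number by at most one.

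The base case $m=p+1$ has parameters $(p,2)$. You need an independent set meeting every $p$-clique of a $K_{p+1}$-free graph on at most $2p$ vertices.
\begin{itemize}
\item With at most $2p-1$ vertices and $\omega(G)=p$, Hajnal's lemma gives a single vertex.
\item With exactly $2p$ vertices, apply Hall's theorem to the non-edges between a maximum clique and its complement, together with the usual exchange argument. Either some vertex lies in every maximum clique, or $V(G)$ splits into $p$ disjoint non-adjacent pairs. In the latter case each $p$-clique contains exactly one vertex from each pair, so any single pair meets every $p$-clique.
\end{itemize}
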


\begin{lemma}[\cite{nenov2002class}]
\label{l:k_m-1-equality}
        If 
    $p = 3$ and $m \geq 6$, 
    then $F_v(a_1, \ldots, a_k ; K_{m-1}) = m+6$.
\end{lemma}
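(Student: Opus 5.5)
The plan is to prove the two inequalities $F_v(a_1,\ldots,a_k;K_{m-1}) \le m+6$ and $\ge m+6$ separately. Both proceed by induction on $m$ from the base value $m=6$. Entries $a_i=1$ can be discarded (such a color is forbidden outright, so it changes neither $m$ nor the arrowing), so we assume every $a_i\in\{2,3\}$ with at least one $a_i=3$.

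\emph{Upper bound.} We use two standard reductions.
\begin{enumerate}
\item[(i)] If $G\ra(a_1,\ldots,a_k)^v$, then $K_1+G\ra(a_1,\ldots,a_i+1,\ldots,a_k)^v$ for every $i$, and also $K_1+G\ra(a_1,\ldots,a_k,2)^v$. To see this, give the new vertex color $i$ (resp.\ the new color). Then either some neighbor shares that color, which completes a larger clique, or $G$ itself avoids color $i$ (resp.\ the new color), and the hypothesis on $G$ applies.
\item[(ii)] $G\ra(3,3,2)^v$ implies $G\ra(3,2,2,2)^v$. Merging the last two color classes, which are independent sets, gives a bipartite class, and a bipartite class contains no triangle.
\end{enumerate}
Each step in (i) raises $m$ by one, raises the order by one, and raises the clique number by at most one. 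It therefore suffices to exhibit one graph $\Gamma$ on $12$ vertices with $\omega(\Gamma)\le 4$ and $\Gamma\ra(3,3,2)^v$. By (ii) this covers every $m=6$ tuple with $p=3$, namely $(3,3,2)$ and $(3,2,2,2)$. For general $m$ we build $K_{m-6}+\Gamma$ via (i); it is $K_{m-1}$-free and has $m+6$ vertices. For $\Gamma$ I would try Nenov's $12$-vertex graph, or search among circulants or joins of small $K_4$-free pieces such as $C_5+C_5$ plus two extra vertices. The arrowing property would be verified by a short case analysis on the independent set forming the color-$3$ class.

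\emph{Lower bound.} We must show that no $K_{m-1}$-free graph $G$ on $m+5$ vertices arrows $(a_1,\ldots,a_k)^v$. The induction step goes as follows. If some $a_i=2$, take a vertex $v$ of maximum degree and a maximal independent set $I\ni v$, and color $I$ with color $i$. This leaves a graph $G-I$ that must arrow the tuple with $a_i$ removed, whose parameter is $m-1$. We need $|I|\ge 2$ and $G-I$ to be $K_{m-2}$-free; the latter is arranged by choosing $I$ to meet every maximum clique. That choice is possible using Lemma~\ref{l:k_m-equality}: if $G$ contained many $K_{m-2}$'s with no common transversal independent set, we would get too many vertices relative to $m+p$. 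If all $a_i=3$, then $m$ is odd, and we would instead color a suitable triangle-free set and reduce two colors at once. Lemma~\ref{l:k_m-equality} applied with $K_m$ gives the coarse bound $m+3$ as a sanity floor, and the induction has to climb from there to $m+6$.

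\emph{Main obstacle.} The crux is the base case: showing $F_v(3,3,2;K_5)\ge 12$, and similarly for $(3,2,2,2)$, so that no $K_5$-free graph on $11$ vertices arrows these tuples. The cleanest route here is probably exhaustive enumeration of maximal $K_5$-free graphs on $11$ vertices with a vertex-arrowing test. A structural alternative uses $\chi(G)\ge 7$ together with $\omega\le 4$ on $11$ vertices, which forces a join structure akin to $C_5+C_5$ plus a vertex; one then exhibits an explicit good coloring. Getting the transversal-independent-set step in the induction fully rigorous is the second delicate point.
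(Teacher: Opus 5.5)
The paper gives no proof of this lemma; it is quoted from Nenov. So I am judging your plan on its own terms, and the lower bound, which is the real content, is not established.

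\textbf{Lower bound.} Your induction step hinges on an independent set $I$ meeting every $K_{m-2}$ of $G$. Lemma~\ref{l:k_m-equality} cannot supply this: it bounds the order of $K_m$-free arrowing graphs and says nothing about clique transversals. Nor does such an $I$ follow from $K_{m-1}$-freeness and $|V(G)|=m+5$.
\begin{itemize}
\item For $m=7$, take $G=C_5+\overline{C_7}$, which has $12$ vertices and $\omega(G)=5$. Every independent set lies inside $C_5$ (at most two vertices, hence missing an edge of $C_5$) or inside $\overline{C_7}$ (a vertex or an edge of $C_7$, which some independent triple of $C_7$ avoids). So some $K_5$ always survives.
\item For $m=8$, $\overline{C_{13}}$ fails in the same way.
\item For $m\ge 9$ a transversal does exist for purely graph-theoretic reasons. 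When $\omega(G)=m-2$ one has $\alpha(\overline{G})=m-2\ge |V(G)|/2$. A Hall-type argument in $\overline{G}$ then yields either a vertex lying in every maximum independent set of $\overline{G}$, or a perfect matching each of whose edges meets all of them. Also, $|I|\ge 1$ already suffices for your step; you do not need $|I|\ge 2$.
\end{itemize}
So the whole difficulty is concentrated in $m\in\{6,7,8\}$. There the transversal, or some other reduction (for instance a vertex adjacent to all others in a minimal counterexample), must be extracted from the arrowing hypothesis itself. Your proposal offers nothing at these values. The base case is deferred to an unspecified enumeration. The structural alternative starts from $\chi(G)\ge 7$, whereas arrowing $(3,2,2,2)^v$ only yields $\chi(G)\ge m=6$.

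Incidentally, the all-$3$ case needs no separate treatment. By the merging argument of your (ii), $G\ra(a_1,\ldots,a_k)^v$ implies $G\ra(3,2^{m-3})^v$, so proving the lower bound for that single tuple suffices.

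\textbf{Upper bound.} This half is repairable but wrong as written.
\begin{itemize}
\item Reduction (i) is false: $C_5\ra(2,2)^v$, yet $K_1+C_5\not\ra(3,2)^v$ (apex in the second color, rim in the first). Your argument only treats colorings in which the new vertex receives color $i$. If it receives some $j\neq i$, the constraint on color $j$ inside $G$ tightens to $K_{a_j-1}$-freeness.
\item The right tool is the equivalence: $K_1+G\ra(a_1,\ldots,a_k)^v$ if and only if $G\ra(a_1,\ldots,a_j-1,\ldots,a_k)^v$ for every $j$, where an entry $1$ means that color is unusable.
\item With this equivalence, induction on $m$ does show that $K_{m-6}+\Gamma$ works for every admissible tuple. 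Lowering a $2$, or one of several $3$'s, lands in the inductive hypothesis. Lowering the only $3$ of $(3,2^{m-3})$ leaves $(2^{m-2})$, which $K_{m-7}+\Gamma$ arrows because $\chi(\Gamma)\ge 6$ follows from $\Gamma\ra(3,2,2,2)^v$.
\item Even the true half of (i), $K_1+G\ra(a_1,\ldots,a_k,2)^v$, needs the case where the apex does not get the new color. Handle it by merging the new-color vertices of $G$ (an independent set) into the apex's class, using that the apex is adjacent to everything.
\item Finally, $\Gamma$ must actually be exhibited and checked, not merely searched for. The unique graph in $\mathcal{F}_v(2,3,3;K_5;12)$ mentioned in the paper does the job.
\end{itemize}
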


Folkman numbers of the form 
$F_v(a_1, \ldots, a_k ; K_{m-1})$ 
were studied extensively by Bikov, who took a systematic approach based on the value of $\max(a_1, \ldots, a_k)$ \cite{bikov2018}.
We note that Bikov's work is also a great resource for an extensive overview on Folkman numbers.
We also refer the interested reader to a book by Soifer~\cite{soifer2024new} for more background on the history of Folkman numbers as well as many other interesting problems related to graph coloring.

\subsection{Summary of New Results}

In this work, we present new bounds on Folkman numbers
arrowing $K_2$ and $K_3$, while avoiding graphs up to six vertices. Namely, we explore avoiding $K_k$'s, $J_k$'s for $k \in \{4,5,6\}$, as well as $C_4$ and $W_5$.

\begin{table}[t]
\centering
\begin{tabular}{@{}rccccccc@{}}
\toprule
$H$                           & $J_4$     & $K_4$               & $J_5$     & $K_5$     & $J_6$     & $K_6$            \\ \midrule
$F_v(2,3;H)$                    & $\undercite{HassanJNRX23}{14}$        &  $\underlem{l:k_m-equality}{7}$                   &  $4$         & {$4$}         &  {$4$}         &  {$4$}                \\
$F_v(2,2,3;H)$ &       $\undercite{HassanJNRX23}{20 \text{--} {\textbf{36}^\circsym}}$ & $\undercite{coles2006computing}{14}$                  &  $\textbf{10}^\filtersym$        &  {$\underlem{l:k_m-equality}{8}$}         &  {$5$}         &  {$5$}               \\
$F_v(3,3;H)$   & $\textbf{23}^{\llsym} \text{--} \textbf{45}^{\circsym}$ & $\undercite{khadzhiivanov1983example,PiwakowskiRU99}{14}$                  &  $\textbf{11}^\filtersym$       &  $\underlem{l:k_m-equality}{8}$         &  $5$         &  $5$                \\
$F_v(2,3,3;H)$ &                      & $\undercite{bikov2017edge}{20 \text{--} 24}$ & {$ \textbf{15}^\extsym \text{--}\textbf{18}^\circsym$ } &  $\underlem{l:k_m-1-equality}{12}$        &  {$\textbf{12}^\filtersym$}        &  {$\underlem{l:k_m-equality}{9}$}                \\
$F_v(3,3,3;H)$ &                                &  {$\leq \textbf{51}^\circsym$}           &  {$\leq \textbf{32}^{\circsym \modsym}$} &   {$\leq \textbf{21}^\circsym$} & $\textbf{15}^{\extsym \circsym}$ &  $\underlem{l:k_m-1-equality}{13}$ \\ \midrule
$F_e(3,3;H)$             & $\infty$  &   $\undercite{BikovN20,lange2014MaxCut}{21 \text{--} 786}$                  &  {$\leq \textbf{43}^\modsym$} & $\undercite{khadzhiivanov1983example,PiwakowskiRU99}{15}$        & $\textbf{11}^\filtersym$        & $\undercite{graham1968edgewise}{8}$                \\ \bottomrule
\end{tabular}
\caption{
This table summarizes new and previously known results for Folkman numbers arrowing $K_2$ and $K_3$ with up to three colors avoiding $J_k$ and $K_k$ for $k \in\{4,5,6\}$. 
New results have been boldfaced.
References for previously known results are provided below the value. Markings next to the bounds indicate how the bound was found according to the legend below. 
For small numbers with no reference or marking, the bounds are easy to see: (1) For $H \in \{J_5, K_5, J_6, K_6\}$, $F_v(2,3;H) = 4$ follows from the fact that $K_4$ is $H$-free and $K_4 \ra (2,3)^v$. (2) For $H \in \{J_6, K_6\}$, $F_v(2,2,3;H) = F_v(3,3;H) = 5$ follows from the fact that $K_5$ is $H$-free, and $K_5 \ra (2,2,3)^v$ and $K_5 \ra (3,3)^v$.
\\
{\footnotesize
$\filtersym$ Result obtained by generating graphs with \geng filters. \\
$\extsym$ Result obtained by extending smaller graphs. \\
$\circsym$ Result obtained by generating semi-polyciruclant graphs. \\
$\llsym$ Result obtained by generating locally linear graphs. \\
$\modsym$ Result obtained by using/modifying a special graph.\\
}
}
\label{tab:main}
\end{table}

\begin{table}[t]
\centering
\begin{tabular}{@{}rc@{}}
\toprule
Folkman Number     & Bounds    \\ \midrule

$F_v(2,3;C_4)$ &  $\textbf{17}^\filtersym$ \\
$F_v(3,3;C_4)$  &  $ \textbf{30}^{\llsym} \text{--} \textbf{63}^{\modsym} $ \\ \midrule
$F_e(3,3;W_5)$     & $\leq \textbf{64}^{\modsym}$ \\
\midrule
$F_v(2,3,3,3;K_5)$ & {$\leq \textbf{32}^\circsym$} \\
$F_v(3,3,3,3;K_6)$ &  $\leq \textbf{30}^\circsym$ \\ 
\bottomrule
\end{tabular}
\caption{This table summarizes new results for Folkman numbers arrowing $K_2$ and $K_3$ with up to four colors, avoiding $C_4$, $W_5$, $K_5$, and $K_6$. Results are marked according to the legend in Table~\ref{tab:main}.}
\label{tab:my-table2}
\end{table}

Our upper bounds are obtained by finding witness graphs through various methods, such as 
studying existing well-known graphs, and generating polycirculant graphs. Our lower bounds are obtained from 
generating locally linear graphs.
We provide links for witness graphs on the online repository House of Graphs. Our results are summarized in Tables~\ref{tab:main} and~\ref{tab:my-table2}.
We also
answer an open question posed by Hassan et al.~\cite{HassanJNRX23}, who asked whether a $(3,3)^e$ arrowing graph without $W_5$ exists; we show that $F_e(3,3;W_5) \leq 64$. Now, the only graph $H$ on five vertices for which the existence of $F_e(3,3;H)$ is unknown is $H = \ptt$. We prove the following facts about any minimal witness to this Folkman number:

\begin{restatable}{thm}{pttThm}
\label{thm:ptt}
Suppose $F_e(3,3;\ptt)$ exists. Then, a minimal arrowing graph $G$ has the following properties:
    \begin{enumerate}
        \item $G$ must be $K_4$-free.
        \item Each edge in $G$ must belong to at least two triangles. Equivalently, for any vertex $u \in V(G)$, all $v$ in $G[N(u)]$ have degree $\geq 2$ within $G[N(u)]$.
        \item For any vertex $u \in V(G)$, $G[N(u)]$ must be $K_3$-free and $C_4$-free. 
        \item Suppose $u$ is a vertex in $G$. Then, for each edge $e$ in $G[N(u)]$, there exists a vertex $v \in X = V(G) - N(u) - {u}$ that both of $e$’s endpoints are adjacent to.
        Moreover, for any two edges in $G[N(u)]$, the corresponding vertices $v \in X$ are distinct.
        \item Every vertex $u \in V(G)$ has degree $ \geq 8$.
    \end{enumerate}
\end{restatable}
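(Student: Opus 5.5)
The plan is to derive everything from two facts: $G$ is $\ptt$-free, and $G$ is edge-minimal with $G \ra (3,3)^e$. First I would fix the shape of $\ptt$. The complement of $P_2 \cup P_3$ (with $P_2 = ab$, $P_3 = cde$) has all edges except $ab, cd, de$. So $\ptt$ is two \emph{non-adjacent} vertices $a,b$ with three common neighbours $c,d,e$, where $ce$ is an edge. Every step below exhibits exactly this configuration: a non-adjacent pair with three common neighbours, two of which are adjacent.

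I would prove item 2 first, since the rest depends on it. By minimality, $G-e$ has a 2-coloring with no monochromatic triangle. Giving $e$ red must create a red triangle $ewx$, and giving it blue must create a blue triangle $ewy$. The two apex vertices have different colour patterns on their edges, so $x \neq y$ and $e$ lies in at least two triangles. The stated reformulation is immediate: an edge $uv$ lies in a triangle $uvw$ exactly when $w$ is a neighbour of $v$ inside $G[N(u)]$.

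For item 1, suppose $\{w,x,y,z\}$ spans a $K_4$. It is not a whole component, since $K_4 \not\ra (3,3)^e$ and deleting that component would contradict minimality. So some outside vertex $v$ is adjacent to, say, $w$.
\begin{itemize}
\item If $v$ is adjacent to exactly three of the four, say $w,x,y$ but not $z$, then $a=v$, $b=z$, $c=w$, $e=x$, $d=y$ gives a $\ptt$.
\item If $v$ has all four neighbours, we get a $K_5$, which contains $\ptt$.
\item If $v$ has at most two neighbours in the $K_4$, apply item 2 to the edges $vw$ (and $vx$, if present). This forces further triangles through outside vertices $t$, and I would chase these until some vertex of the $K_4$ and an outside vertex form a non-adjacent pair with three common neighbours, two of them adjacent.
\end{itemize}
The last case is a finite case analysis, and I expect it to be the fiddliest part of item 1.

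With item 1, the rest of items 3 and 4 is short.
\begin{itemize}
\item \textbf{Item 3, triangles.} A triangle in $G[N(u)]$ together with $u$ is a $K_4$, which item 1 excludes.
\item \textbf{Item 3, 4-cycles.} A $C_4$ $pqrs$ in $G[N(u)]$ has no chords, since a chord would give a triangle there. Then $p,r$ are non-adjacent with common neighbours $q,s,u$, and $uq$ is an edge, which is a $\ptt$.
\item \textbf{Item 4, existence.} Take an edge $xy$ of $G[N(u)]$. By item 2 it lies in a second triangle $xyv$ with $v\neq u$. If $v \in N(u)$ then $\{u,x,y,v\}$ is a $K_4$, so $v\in X$.
\item \textbf{Item 4, distinctness.} Suppose one $v$ serves two edges. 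If they share a vertex, say $xy$ and $yz$, then $xz\notin E$ by triangle-freeness of $G[N(u)]$. The vertices $u,v$ are non-adjacent (as $v\in X$) with common neighbours $x,y,z$, and $xy$ is an edge, giving a $\ptt$. If the edges are disjoint, say $xy$ and $zw$, the same holds with common neighbours $x,y,z$.
\end{itemize}

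Item 5 is the main obstacle. Items 2 and 3 only say that $H=G[N(u)]$ has minimum degree $\ge 2$ and girth $\ge 5$, which forces merely $|V(H)|\ge 5$. To push this to $8$ I would use minimality at the vertex level. $G-u$ is a proper subgraph, so it has a good 2-coloring, and $G$ fails to be good only if no extension to the star at $u$ exists. Such an extension is a 2-coloring $\phi$ of $V(H)$ (the colour of $ux$) such that no edge $xy$ of $H$ has $\phi(x)=\phi(y)$ equal to the colour of $xy$. This is a colouring problem on $H$ with each edge forbidding one monochromatic pattern. I would enumerate the candidate $H$ on at most $7$ vertices with minimum degree $\ge2$ and girth $\ge5$: $C_5$, $C_6$, $C_7$, and the few theta-like graphs on $6$ or $7$ vertices. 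For each, I would show that every edge-labelling admits a valid $\phi$, or else use the freedom to recolour $G-u$, with item 4 supplying the distinct private vertices in $X$ as extra structure. Where a uniform argument does not appear, I would fall back on a small computer check of these finitely many labelled configurations.
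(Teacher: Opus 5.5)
\paragraph{The gap is in item 1.} Items 3, 4 and your candidate list in item 5 all use item 1, so the gap carries over to them.

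The root cause is how you describe a copy of $\ptt$: as a \emph{non-adjacent} pair with three common neighbours, two of them adjacent. But $\ptt$-freeness forbids $\ptt$ as a subgraph, not as an induced subgraph. You use this yourself when you say $K_5$ contains $\ptt$. So the two vertices playing $a,b$ may be adjacent in $G$.

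With that, the case of an outside vertex $v$ adjacent to exactly two vertices $w,x$ of the $K_4$ $\{w,x,y,z\}$ is immediate. Both $w$ and $x$ are adjacent to $v,y,z$, and $yz$ is an edge, so this is a $\ptt$.

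What remains is the case where every vertex outside the $K_4$ has at most one neighbour in it. Here your plan of chasing triangles until a $\ptt$ appears cannot work. Take two copies of $K_4$ glued at one vertex. This graph is $\ptt$-free, since no two vertices have three common neighbours. Every edge lies in two triangles. Each vertex outside either $K_4$ has exactly one neighbour in it. So $\ptt$-freeness together with item 2 does not exclude a $K_4$; the contradiction has to come from the arrowing and minimality hypotheses.

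\paragraph{The missing recolouring argument (the paper's).} Suppose every outside vertex has at most one neighbour in $X=\{w,x,y,z\}$. Then every triangle of $G$ containing an edge of $X$ lies inside $X$.
Take an edge $f$ of the $K_4$ and a good colouring of $G-f$, which exists by minimality. Overwrite the six edges of $X$ with a 2-colouring of $K_4$ that has no monochromatic triangle, e.g.\ a red $4$-cycle and blue diagonals. No triangle meets both the recoloured edges and the untouched ones, so the result is a good colouring of $G$. This contradicts $G\ra(3,3)^e$. The argument also covers your separate component case. Once item 1 is repaired this way, your items 2--4 coincide with the paper's proof.

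\paragraph{Item 5 takes a different route.} The paper runs an exhaustive computer search over all triangle-free graphs $H$ for which adding a universal vertex creates no $\ptt$, i.e.\ triangle-free and $C_4$-free graphs. You instead use items 2--3 to reduce to $C_5$, $C_6$, $C_7$ and $\theta(2,3,3)$. That reduction is correct, and it can be finished by hand without recolouring $G-u$:
\begin{itemize}
\item On a cycle $v_1\cdots v_n$, give each $v_i$ the colour opposite to the label of the edge $v_iv_{i+1}$.
\item On the theta graph with branch vertices $s,t$, consider each of the three $s$--$t$ paths. A path rules out a pair $(\phi(s),\phi(t))$ only if its labels alternate and its first and last labels equal $\phi(s)$ and $\phi(t)$. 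So each path rules out at most one of the four pairs, and some pair survives.
\end{itemize}
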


We note that the first point in the theorem above gives us the following corollary, which speaks to the difficulty of finding this number.

\begin{corollary}
    $F_e(3,3;4) \leq F_e(3,3;\ptt)$
\end{corollary}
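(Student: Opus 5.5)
The inequality follows almost directly from the first item of Theorem~\ref{thm:ptt}. Recall that $F_e(3,3;4)$ denotes $F_e(3,3;K_4)$, the smallest order of a $K_4$-free graph arrowing $(3,3)^e$. If $F_e(3,3;\ptt)$ does not exist, we read it as $\infty$ and the inequality is trivial. So assume it exists, and let $n = F_e(3,3;\ptt)$. The plan is to exhibit a $K_4$-free graph on at most $n$ vertices that arrows $(3,3)^e$.

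First, pick a witness $G_0$ on $n$ vertices that is $\ptt$-free with $G_0 \ra (3,3)^e$. Next, delete edges one at a time as long as the arrowing $(3,3)^e$ is preserved, and call the result $G$. This terminates because $G_0$ has finitely many edges. The resulting $G$ is minimal in the sense defined in the introduction: deleting any further edge destroys the arrowing.

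We also need $G$ to still be $\ptt$-free. This holds because deleting edges cannot create a subgraph: any copy of $\ptt$ in $G$ would already be a copy in $G_0$. So $G$ is a minimal $\ptt$-free graph arrowing $(3,3)^e$ on $n$ vertices. We do not need isolated vertices removed, but removing them would only decrease the order.

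By item~1 of Theorem~\ref{thm:ptt}, $G$ is $K_4$-free. Hence $G \in \mathcal{F}_e(3,3;K_4;n)$, which gives $F_e(3,3;4) \le n = F_e(3,3;\ptt)$.

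The only point needing care is that Theorem~\ref{thm:ptt} is stated for a minimal arrowing graph. So the reduction to a minimal subgraph, together with the fact that $\ptt$-freeness is preserved under edge deletion, is the key step. Everything else is immediate.
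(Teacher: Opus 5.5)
Your proposal is correct and follows the paper's route: the paper obtains the corollary directly from item 1 of Theorem~\ref{thm:ptt}, and you spell out the step it leaves implicit. That step is passing from an arbitrary $\ptt$-free witness on $F_e(3,3;\ptt)$ vertices to an edge-minimal one, which is still $\ptt$-free because edge deletion cannot create a subgraph, and this edge-minimality is exactly the sense of ``minimal'' used in the theorem's proof.
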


The rest of our paper is organized as follows.
We explain how polycirculant and locally linear graphs are generated in Sections~\ref{sec:polycirc} and~\ref{sec:locallin}, respectively. We discuss the results presented in Tables~\ref{tab:main} and~\ref{tab:my-table2} in Section~\ref{sec:results}. Our theoretical results on $F_e(3,3;\ptt)$ listed in Theorem~\ref{thm:ptt} are presented in Section~\ref{sec:ptt}.
Finally, we conclude in Section~\ref{sec:conclude}. The code of the algorithms used in our paper is available at \url{https://github.com/Steven-VO/vertexFolkman}.

\section{Generating Polycirculant Graphs}
\label{sec:polycirc}

\subsection{Definitions}

Polycirculant graphs are a class of highly symmetric graphs which have recently been explored to find witness graphs for Ramsey numbers~\cite{DBLP:journals/dam/GoedgebeurO22}.
We formally define these below.
An automorphism $f : V(G) \xrightarrow{} V(G)$ of a graph $G$ is a bijection that preserves the adjacency of vertices. We define $\Aut(G)$ as the set of automorphisms of $G$. Every automorphism $\theta\in \Aut(G)$ induces a partition of the vertex set in the following way: \( v\sim w \Leftrightarrow \exists n\in\mathbb{N} \text{ s.t. } \theta^n(v)=w \). The partition classes are called the \textit{orbits} of $V$ under $\theta$.

A graph is called \textit{polycirculant} (also known as block-circulant) if it has a non-trivial automorphism $\theta$ such that all vertex orbits under $\theta$ have the same size. If there are $k$ such vertex orbits, we call it $k$-polycirculant.
These graphs have been used succesfully in the past to obtain new lower bounds for Ramsey numbers~\cite{Exoo_1998,DBLP:journals/dam/GoedgebeurO22,DBLP:journals/dm/Wesley26}.

1-polycirculant graphs are also know as \textit{circulant} graphs.
These can also be defined using an integer $n$ and a set of integers $D$ like so: let $V(G) = \{0,1,2,\ldots, n-1\}$ and for every $i \in V(G)$, the $i^{th}$ vertex of $G$ is adjacent to the vertices labeled $(i-j) \mod n$ and $(i+j) \mod n$ for each $j \in D$. Less formally, $G$ has an automorphism that is a cyclic permutation of its vertices.
For example, each cycle graph $C_n$ is circulant.

In this work, we also used a generalization of polycirculant graphs where the orders of orbits do not
have to be the same. A graph is called \textit{semi-polycirculant} if there is an automorphism \(\theta\) such that for every two sizes of vertex orbits $a$ and $b$, we have $a | b$ or $b|a$.  For example, if the vertex orbits have sizes \([20,5,5]\), this can be thought of as a circulant graph on 20 vertices ``rotating around'' two circulant graphs on 5 vertices. 
The array $[20,5,5]$ is also called a block-structure, i.e., the vertices of the graph can be arranged into blocks of sizes 20, 5, and 5, where each block induces a circulant graph.

\subsection{Generation}

Given a block-structure $B$, and graphs $H_1$ and $H_2$, the algorithm generates all semi-polycirculant graphs $G$ adhering to said block-structure $B$, where $G$ is $H_1$-free and the complement of $G$ is $H_2$-free.
This is done using a simple backtracking algorithm that decides the presence of each edge-class under $B$. While this sounds computationally expensive, many steps can be taken to avoid generating isomorphic copies of the same graph due to the highly symmetric structure of semi-polycirculant graphs, allowing for a more efficient algorithm. More details on how these isomorphs are avoided can be found in \cite{DBLP:journals/dam/GoedgebeurO22}, where this process was used to find witness graphs for new lower bounds on Ramsey numbers.

To find a witness graph for a Ramsey number $R(a_1, a_2)$ on $n$ vertices, one can ask the algorithm above for a graph $G$ on $n$ vertices avoiding $K_{a_1}$ in $G$ and $K_{a_2}$ in the complement of $G$. Such a graph, if it exists, shows that $R(a_1, a_2) > n$ since it is equivalent to a coloring of $K_n$ that has no copy of $K_{a_1}$ in the first color or $K_{a_2}$ in the second color.

In the context of Folkman numbers, witness graphs give upper-bounds: we look for a graph on $n$ vertices that has the arrowing property while avoiding the specified subgraph. 
Thus, we use the generator described above to find semi-polycirculant graphs avoiding the specified subgraph, but the arrowing property is checked using a separate algorithm (details are in the Section~\ref{sec:arrowtest}).
To find witness graph for a Folkman number $F_v(a_1, a_2; H)$ on $n$ vertices, it is clear that we must ask for an $H$-free graph, but we do not have enough information for what subgraph, if any, to avoid in the complement. In theory, we could allow any subgraph in the complement, but this led to an infeasible number of graphs during the generation process. 

Therefore we decided to make an educated guess about the independence number of the desired Folkman graph.
Since one would expect Folkman graphs to be relatively dense, it makes sense to believe that the independence number is not far from the closest Ramsey number.
For example, the \((3,3;J_4;45)^v\)-graph we found has independence number $10$, while \(R(J_4,K_9) \geq 41 \) and \(R(J_4,K_{10})\geq 49\) (best-known lower bounds)~\cite{overbergheThesis}.

We also know that Folkman graphs require a high chromatic number, which is also atypical for graphs with large independent sets.

For the block-structure of the possible Folkman graphs, we usually tried all sequences that were relatively short, because otherwise the number of output graphs would be too high.  For example, $[15,15,5,5]$ would be feasible, but $[12,6,2,2,2,2,2]$ would be too expensive.

\section{Generating Locally Linear Graphs}
\label{sec:locallin}

\subsection{Definitions}

A graph is called \textit{locally linear} (LL) if every edge is contained in exactly one triangle~\cite{fronvcek1989locally}. 
Equivalently, the neighborhood of each vertex is a 1-regular graph. 
Note that $J_4$ contains two triangles sharing an edge, so it is easy to see that any locally linear graph is also $J_4$-free. 
Moreover, when finding graphs that arrow $K_3$, 
it is sufficient to only explore graphs where each edge is part of a triangle. If this is not the case, then this edge is redundant and can be removed. 
Thus, the generation of locally linear graphs is useful when looking for Folkman numbers arrowing $K_3$ and avoiding $J_4$. 
Determining the maximum number of edges (or equivalently: triangles) in an LL-graph with $n$ vertices is an open problem, even asymptotically.

\subsection{Generating All Locally Linear Graphs}

Locally linear graphs have the following recursive structure: if you remove any vertex $v$ and all the edges of all triangles containing $v$, then the resulting graph is also locally linear.
This allows us to generate all locally linear graphs recursively by adding a vertex $v$ and considering all suitable neighborhoods $N(v)$. $N(v)$ should be an independent set of $G$ with $|N(v)|$ even. $N(v)$ must be partitioned into pairs (which form the triangles with $v$) such that the two vertices in each pair have no common neighbors in $G$.
It is a straightforward application of the canonical construction path method to modify this procedure into an algorithm that does not produce isomorphic copies.~\cite{Mc98}
More specifically, we start with a graph consisting of a single vertex. We recursively add a vertex to the graph in every possible way described above, but only once up to equivalence according to $\Aut(G)$.  After the vertex $v$ is added, we calculate whether $v$ was the \textit{canonical} vertex.  We use heuristics, like minimum degree, to decide this very quickly for most graphs.  If all heuristics fail to determine the canonical vertex, we use \textit{nauty} to calculate $\Aut(G)$ and the vertex orbits, which is always sufficient but computationally expensive.

Note that locally linear graphs need not be connected. To compute Folkman numbers, these graphs are obviously not interesting.  More generally, one can assume that a minimal Folkman graph is \textit{maximal locally linear} (MLL): no more triangles can be added such that the graph is still LL. In general, we found the MLL graphs by filtering the LL graphs after they were generated. But we also incorporated heuristics into the generator to specifically target MLL graphs, by avoiding the generation of many non-maximal graphs. For example, after adding the last vertex, we first check for maximality before checking for canonicity since the latter is much more expensive.  This allowed us to compute the MLL graphs one order further than the LL graphs.

We generated all LL graphs up to 22 vertices and MLL graphs up to 23 vertices. The counts are in Table~\ref{table:LL}.

Since $C_4 \subseteq J_4$, minimal Folkman graphs arrowing $K_3$ and avoiding $C_4$ can also be assumed to be locally linear.
An easy modification of the previous algorithm is sufficient to generate only the $C_4$-free graphs.  When adding a new vertex $v$, a 4-cycle appears only when two vertices in $N_v$ are at distance two in the original graph. So we can avoid this by imposing an extra restriction on the candidate neighborhoods. This allowed us to generate all $C_4$-free LL graphs up to 29 vertices, and $C_4$-free MLL graphs up to 30 vertices.

\begin{table}
\scriptsize
	\begin{tabular}{c|cccccc}
		$n$ & $J_4$-free & $\max J_4$-free & LL & MLL & $C_4$-free & $\max C_4$-free \\
		\hline
5 & 22 & 3 & 3 & 1 & 3 & 1 \\
6 & 69 & 6 & 4 & 2 & 4 & 2 \\
7 & 255 & 11 & 6 & 2 & 6 & 2 \\
8 & 1\,301 & 14 & 8 & 3 & 7 & 3 \\
9 & 9\,297 & 56 & 15 & 4 & 12 & 4 \\
10 & 97\,919 & 163 & 22 & 7 & 16 & 7 \\
11 & 1\,519\,456 & 587 & 41 & 9 & 25 & 9 \\
12 & 34\,270\,158 & 2\,922 & 80 & 16 & 35 & 12 \\
13 & 1\,101\,120\,276 & 20\,439 & 201 & 33 & 60 & 20 \\
14 & 36\,508\,415\,316 & 198\,633 & 584 & 83 & 91 & 30 \\
15 &   &   & 2\,498 & 281 & 169 & 53 \\
16 &   &   & 14\,202 & 1\,311 & 288 & 80 \\
17 &   &   & 115\,152 & 8\,527 & 598 & 160 \\
18 &   &   & 1\,235\,365 & 74\,070 & 1\,248 & 284 \\
19 &   &   & 17\,150\,323 & 799\,999 & 3\,153 & 657 \\
20 &   &   & 299\,882\,532 & 10\,661\,060 & 8\,576 & 1\,469 \\
21 &   &   & 6\,539\,648\,206 & 173\,686\,018 & 28\,196 & 4\,427 \\
22 &   &   & 176\,834\,815\,945  & 3\,475\,630\,737 & 104\,388 & 14\,844 \\
23 &   &   &   & 85\,731\,386\,161  & 451\,577 & 59\,343 \\
24 &   &   &   &   & 2\,200\,310 & 259\,395 \\
25 &   &   &   &   & 12\,097\,715 & 1\,296\,145 \\
26 &   &   &   &   & 73\,888\,541 & 7\,219\,888 \\
27 &   &   &   &   & 499\,481\,864 & 44\,423\,361 \\
28 &   &   &   &   & 3\,713\,206\,747 & 299\,707\,178 \\
29 &   &   &   &   & 30\,272\,221\,588  & 2\,218\,262\,426 \\
30 &   &   &   &   &   & 17\,955\,530\,387 \\
	\end{tabular}
	\caption{Counts for $J_4$-free graphs, maximal-$J_4$-free graphs, locally linear graphs (LL), locally linear graphs where no more triangle can be added (MLL), counts of $C_4$-free LL graphs, and counts for $C_4$-free LL graphs where no more triangle can be added without making $C_4$.}
    \label{table:LL}
\end{table}

\section{New Bounds on Folkman Numbers}
\label{sec:results}

In Table~\ref{tab:main}, we show several new and existing bounds on Folkman numbers arrowing $K_2$ and $K_3$ while avoiding $J_k$ and $K_k$ for $k \in \{4,5,6\}$.
Note that the Folkman numbers are listed in non-decreasing order from top to bottom. It is easy to see that it must be so for all the numbers listed except $F_v(2,2,3;H) \leq F_v(3,3;H)$, for which we note the following observation:

\begin{observation}
    $G \ra (3,3)^v \implies G \ra (2,2,3)^v$.
\end{observation}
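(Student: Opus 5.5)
We argue by contraposition: given a $3$-coloring of $V(G)$ with colors $1,2,3$ that has no monochromatic $K_2$ in colors $1$ and $2$ and no monochromatic $K_3$ in color $3$, we build a $2$-coloring of $V(G)$ with no monochromatic $K_3$. This shows that $G \not\ra (2,2,3)^v$ implies $G \not\ra (3,3)^v$, which is the statement.

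Let $C_1, C_2, C_3$ be the color classes. By assumption, $C_1$ and $C_2$ are independent sets and $C_3$ induces a triangle-free graph. Define a $2$-coloring by giving color red to $C_1 \cup C_2$ and color blue to $C_3$. The blue class is $C_3$, which is triangle-free.

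The key step is to check that $C_1 \cup C_2$ contains no triangle. A triangle has three vertices, so by pigeonhole two of them lie in the same class $C_i$ with $i \in \{1,2\}$. Those two vertices are adjacent because they belong to a triangle, which contradicts the independence of $C_i$. Hence the red class is also triangle-free, and the $2$-coloring has no monochromatic $K_3$, so $G \not\ra (3,3)^v$.

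Every step here is immediate, so there is no real obstacle. The only point needing care is the pigeonhole argument: the union of two independent sets induces a bipartite graph, and bipartite graphs are triangle-free.
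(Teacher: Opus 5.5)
Your proof is correct and follows the paper's argument: both merge the two independent color classes into one color, which is bipartite and hence triangle-free, and keep the triangle-free third class as the other color. The only difference is that the paper phrases this as a contradiction and you phrase it as a contrapositive.
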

\begin{proof}
    Suppose for contradiction that $G \not\ra (2,2,3)^v$ and let $c$ be a coloring of $G$ witnessing this. Let $X_i \subseteq V(G)$ be the vertices colored with color $i$ in $c$.
    Observe that $X_1$ and $X_2$ are independent sets.
    Consider the coloring $c'$ of $G$ where the vertices in $X_1 \cup X_2$, and $X_3$ are colored using the first and second color, respectively. 
    Since $X_1$ and $X_2$ are independent sets, $X_1 \cup X_2$ is bipartite and thereby $K_3$-free, so $c'$ must be a monochromatic $K_3$-free coloring of $G$.
    However, this contradicts the fact that $G \ra (3,3)^v$.
\end{proof}

\begin{corollary}
    $F_v(2,2,3;H) \leq F_v(3,3;H)$ for all $H$.
\end{corollary}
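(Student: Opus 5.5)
The statement just above is the Corollary $F_v(2,2,3;H) \leq F_v(3,3;H)$ for all $H$. The plan is to derive it directly from the preceding Observation, that $G \ra (3,3)^v$ implies $G \ra (2,2,3)^v$, by comparing the sets of witness graphs.

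First I dispose of the degenerate case. If $F_v(3,3;H)$ does not exist, meaning no $H$-free graph arrows $(3,3)^v$, we read the right-hand side as $\infty$, and the inequality holds trivially.

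Otherwise, let $n = F_v(3,3;H)$ and pick any $G \in \mathcal{F}_v(3,3;H;n)$. Then $G$ is an $H$-free graph on $n$ vertices with $G \ra (3,3)^v$. By the Observation, the same graph satisfies $G \ra (2,2,3)^v$. The property of being $H$-free depends only on $G$ and not on which arrowing we ask for, so $G \in \mathcal{F}_v(2,2,3;H;n)$. Since $F_v(2,2,3;H)$ is by definition the minimum order of such a graph, we get $F_v(2,2,3;H) \leq n = F_v(3,3;H)$.

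There is no real obstacle here; all the combinatorial content is in the Observation. The only care needed is the convention for Folkman numbers that do not exist, and the inequality survives it in both directions. In particular, if $F_v(2,2,3;H)$ does not exist, the argument above shows $F_v(3,3;H)$ cannot exist either.
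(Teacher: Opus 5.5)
Your proof is correct and follows the paper's route: the corollary is an immediate consequence of the preceding Observation that $G \ra (3,3)^v$ implies $G \ra (2,2,3)^v$, since any $H$-free witness for $F_v(3,3;H)$ is then also a witness for $F_v(2,2,3;H)$. Your explicit treatment of the case where a Folkman number does not exist is a reasonable addition that the paper leaves implicit.
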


In Table~\ref{tab:my-table2} we list our new results for $C_4$-free and $W_5$-free Folkman numbers, as well as two new results for $K_5$-free and $K_6$-free 4-color Folkman numbers.
In Tables~\ref{tab:main} and~\ref{tab:my-table2}, we use markings to show which method was used to find the corresponding bound. For numbers with two markings, the number is discussed in the sections corresponding to both markings.

Our results were obtained by generating/constructing graphs using five methods, then testing the arrowing properties of said graphs. In Section~\ref{sec:arrowtest} we describe how we tested the arrowing properties of graphs.
In Sections \ref{sec:geng}--\ref{sec:special} we discuss the five different approaches we used to obtain our results. In Section~\ref{sec:geng} we discuss results obtained by generating graphs using \nauty's \geng filters. In Section~\ref{sec:ext} we discuss results obtained by extending generated graphs by exploiting special properties. In Sections~\ref{sec:polygen} and~\ref{sec:llgen} we discuss how the algorithms discussed in Sections~\ref{sec:polycirc} and ~\ref{sec:locallin} were used to obtain our results. In Section~\ref{sec:special} we discuss results obtained by using special graphs.
We provide House of Graph (HoG) ID's for our witness graphs.
For each graph for which the HoG ID is provided many of its properties can be seen by accessing the House of Graphs and searching for said ID. Alternatively, if one is reading this paper digitally, the HoG ID's are linked to the corresponding webpage on the House of Graphs.

\subsection{Testing the Arrowing Property}
\label{sec:arrowtest}

For vertex arrowing $(a_1, \ldots, a_k)^v$ we checked whether the vertices of a graph can be colored with $k$ colors, where the $i^{th}$ color induces a $K_{a_i}$-free graph.
This is done with a backtracking algorithm that assigns colors to the vertices one by one.
We first compute a suitable order \((v_1,\dots,v_n)\) of the vertices by breadth-first search.  This ensures that the graphs $G[\{v_1,\dots,v_i\}]$ are all connected (if $G$ is), and hopefully have relatively many edges.  In that way, forbidden cliques can be detected early, and the total branching factor of the algorithm can be reduced.

For the special case of arrowing $(3,3)^v$, we also use a form of forward-checking: after coloring a vertex $v$ blue, we enumerate all triangles $\{v,w,z\}$ where $w$ is also blue and $z$ is not colored. Then we color $z$ red, and repeat the same procedure on $z$.

The only edge arrowing case we focus on is $(3,3)^e$ for which we used SAT solvers \textit{kissat}, \textit{glucose} and \textit{gurobi}~\cite{BiereFallerFazekasFleuryFroleyksPollitt-SAT-Competition-2024-solvers,een2003extensible,gurobi}. By making a variable $x_{e}$ for each edge $e$ of $G$, we construct a CNF formula where each satisfying assignment corresponds to a good coloring of $G$.
To check whether a graph $G$ arrows $(3,3)^e$ we constructed and checked the satisfiablity of the following CNF formula $G_\phi$:
\[ G_\phi = \bigwedge_{(e_1, e_2, e_3)} (x_{e_1} \lor x_{e_2} \lor x_{e_3}) \land (\overline{x_{e_1}} \lor \overline{x_{e_2}} \lor \overline{x_{e_3}}) \]
where each triple $(e_1, e_2, e_3)$ represents the edges of a triangle in $G$. It is clear that a satisfying assignment gives a good coloring: edges assigned ``true'' can be the first color and edges assigned ``false'' can be the second color.

\subsection{Filters \texorpdfstring{$(\filtersym)$}{}}
\label{sec:geng}

The results marked with $\filtersym$ in Tables~\ref{tab:main} and~\ref{tab:my-table2} are computable using \nauty's \geng with (pre-)filters: we wrote programs to check whether a given graph has $H$ as a subgraph. Since \geng generates graphs by adding one vertex at a time, it is possible to stop the construction as soon as the graph contains $H$ as a subgraph.
Note that in each of our cases, we can assume that the extremal graphs are 2-connected. \geng has options to generate only those graphs.
For all the generated graphs, the arrowing property was checked as described in Section~\ref{sec:arrowtest}. Details of the witness graphs found for each case are provided below.

\subsubsection{ \texorpdfstring{ $F_v(2,2,3;J_5) = 10$ }{ Fv(2,2,3;J5) = 10 }}
There are exactly two extremal graphs: the complement of the Petersen graph, and the circulant graph on 10 vertices with distances 2, 3 and 5. Their HoG ID's are \hogid{45703} and \hogid{21154}.

\subsubsection{ \texorpdfstring{ $F_v(3,3;J_5) = 11$ }{ Fv(3,3;J5) = 11 }}
There is exactly one extremal graph.  It has 11 vertices, and can be seen as 3 copies of $K_4$ glued along a vertex, plus two vertices making $K_4$'s with the non-glued vertices. See Figure~\ref{fig:fv33j5}. The HoG ID is \hogid{51287}.

\begin{figure}[t]
    \centering
    \begin{tikzpicture}
        \draw (1.06066017177982, 1.06066017177982) coordinate (x0);
        \draw (1.44888873943360, 0.388228567653781) coordinate (x1);
        \draw (-1.44888873943360, 0.388228567653781) coordinate (x2);
        \draw (-1.06066017177982, 1.06066017177982) coordinate (x3);
        \draw (0.388228567653781, -1.44888873943360) coordinate (x4);
        \draw (-0.388228567653781, -1.44888873943360) coordinate (x5);
        \draw (0.000000000000000, 1.50000000000000) coordinate (x6);
        \draw (1.29903810567666, -0.750000000000000) coordinate (x7);
        \draw (-1.29903810567666, -0.750000000000000) coordinate (x8);
        \draw (0.400000000000000, 0.000000000000000) coordinate (x9);
        \draw (-0.400000000000000, 0.000000000000000) coordinate (x10);

        \draw[edge_color0] (x0)--(x1)--(x6)--(x0)--(x7)--(x1)--(x9)--(x0)--(x10)--(x1);
        \draw[edge_color0] (x2)--(x3)--(x6)--(x2)--(x8)--(x3)--(x9)--(x2)--(x10)--(x3);
        \draw[edge_color0] (x4)--(x5)--(x7)--(x4)--(x8)--(x5)--(x9)--(x4)--(x10)--(x5);
        \draw[edge_color0] (x9)--(x10);
        \draw[edge_color0] (x6)--(x7)--(x8)--(x6);

        \draw (x0) node[unlabeled_vertex]{}; \draw (x1) node[unlabeled_vertex]{}; \draw (x2) node[unlabeled_vertex]{}; \draw (x3) node[unlabeled_vertex]{}; \draw (x4) node[unlabeled_vertex]{}; \draw (x5) node[unlabeled_vertex]{}; \draw (x6) node[unlabeled_vertex]{}; \draw (x7) node[unlabeled_vertex]{}; \draw (x8) node[unlabeled_vertex]{}; \draw (x9) node[unlabeled_vertex]{}; \draw (x10) node[unlabeled_vertex]{};

    \end{tikzpicture}
    \caption{Unique extremal $(3,3;J_5)^v$-graph}
    \label{fig:fv33j5}
\end{figure}

\subsubsection{ \texorpdfstring{ $F_v(2,3,3;J_6) = 12$ }{ Fv(2,3,3;J6) = 12 }}
There are seven extremal graphs. Since $F_v(2,3,3;J_6) = F_v(2,3,3;K_5)$, one of them is the unique extremal graph for the latter Folkman number, whose HoG ID is \hogid{51277}.

\subsubsection{ \texorpdfstring{ $F_v(2,3;C_4) = 17$ }{ Fv(2,3;C4) = 17 }}

There is one extremal graph, which is therefore bicritical. It has two edges that are not part of a triangle. 
Its HoG ID is \hogid{51178}.

\subsubsection{ \texorpdfstring{ $F_e(3,3;J_6) = 11$ }{ Fe(3,3;J6) = 11 }}
There are $803,872,890$ 2-connected $J_6$-free graphs on 11 vertices.  Of these, 3 graphs arrow $(3,3)^e$, which consecutively differ in only one edge from each other.
All of them have a vertex that is connected to all other vertices, but removing it does not give $(3,3)^v$-arrowing graph. Therefore, these give a counterexample to the reverse direction of Observation~\ref{obs:vertexToEdge}.
The HoG-ID is \hogid{51288}.

\subsection{Extending Graphs \texorpdfstring{$(\extsym)$}{}}
\label{sec:ext}

By inspecting the properties of specific Folkman graphs, we discovered necessary conditions for the subgraphs of witness graphs. This allowed us to ``extend'' these subgraphs to obtain witness graphs, if any, for the Folkman number. A graph $G$ is called a $k$-extension of a graph $G'$ if there exists $S \subset V(G)$ of size $k$ such that $G-S$ is isomorphic to $G'$.

\subsubsection{ \texorpdfstring{ $F_v(2,3,3;J_5) \geq 15$ }{ Fv(2,3,3;J5) >= 15 }}

To show that $F_v(2,3,3;J_5) \geq 15$, we make use of the following observation.

\begin{observation}
    Let $G$ be a graph and $S \subset V(G)$ be an independent set. If $G \ra (2,3,3)^v$, then $G-S \ra (3,3)^v$.
\end{observation}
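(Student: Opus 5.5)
The proof is by contraposition. Suppose $G - S \not\ra (3,3)^v$. We show that $G \not\ra (2,3,3)^v$ by extending a good coloring of $G-S$ to a good coloring of $G$.

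Fix a $2$-coloring $c$ of $V(G)\setminus S$ with colors $2$ and $3$ such that neither color class contains a triangle. Extend $c$ to all of $V(G)$ by assigning color $1$ to every vertex of $S$. We then check the three color classes against the targets $(2,3,3)$.

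\begin{itemize}
\item Color $1$ is the set $S$, which is independent by hypothesis. So it contains no $K_2$.
\item Colors $2$ and $3$ are the same vertex sets as under $c$, and they induce the same subgraphs in $G$ as in $G-S$, since $G-S$ is an induced subgraph. So neither contains a $K_3$.
\end{itemize}

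Hence the extended coloring witnesses $G \not\ra (2,3,3)^v$, which contradicts the assumption.

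This is the same recoloring idea as in the earlier observation that $(3,3)^v$-arrowing implies $(2,2,3)^v$-arrowing, and no step is a real obstacle. The one point to state carefully is that cliques in a color class of $G-S$ coincide with cliques in that class in $G$, which holds because vertex deletion leaves an induced subgraph.
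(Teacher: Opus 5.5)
Your proof is correct and follows the paper's argument exactly: take a triangle-free 2-coloring of $G-S$, give every vertex of $S$ the first color, and observe the resulting coloring of $G$ has no monochromatic $K_2$ in color 1 and no monochromatic $K_3$ in colors 2 and 3. Your explicit remark that $G-S$ is an induced subgraph is the one detail the paper leaves implicit. The only blemish is wording: you announce a contrapositive proof and then close with ``contradicts the assumption''; either framing is fine, but pick one.
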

\begin{proof}
    Suppose for contradiction that $G - S \not\ra (3,3)^v$ and let $c$ be a coloring of $G-S$ witnessing this. 
    Let $X_i \subseteq V(G-S)$ be the vertices colored with color $i$ in $c$.
    Consider the coloring $c'$ of $G$ where the vertices in $S$, $X_1$, and $X_2$ are colored using the first, second, and third, color, respectively. 
    Since $S$ is $K_2$-free, and $X_1$ and $X_2$ are $K_3$-free, $c'$ must be a coloring of $G$ that shows $G \not\ra (2,3,3)^v$.
    However, this contradicts the fact that $G \ra (2,3,3)^v$.
\end{proof}

Since $R(J_5, K_3) = 11$ we know that any $J_5$-free graph with at least 11 vertices has an independent set $S$ of size at least three. 
By the observation above, if there exists $G \in \mathcal{F}_v(3,3;J_5;14)$, then we must have $G-S \in \mathcal{F}_v(2,3,3;J_6;11)$.
Thus, 
it is sufficient to test the arrowing property of all $J_5$-free $3$-extensions of the graph in $\mathcal{F}_v(3,3;J_5;11)$. No arrowing graphs were found and thus $F_v(2,3,3;J_5) \geq 15$.
We prove an upper bound $F_v(2,3,3;J_5) \leq 18$ in Section~\ref{sec:polygen}.

\subsubsection{ \texorpdfstring{ $F_v(3,3,3;J_6) = 15$ }{ Fv(3,3,3;J6) = 15 }}

We first note the following observation.

\begin{observation}
    Let $G$ be a graph and $S \subset V(G)$ be an independent set. If $G \ra (3,3,3)^v$, then $G-S \ra (2,3,3)^v$.
\end{observation}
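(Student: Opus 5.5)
We argue by contraposition, in the same way as the preceding observation: from a good $3$-coloring of $G-S$ for $(2,3,3)^v$ we build a good $3$-coloring of $G$ for $(3,3,3)^v$. So suppose $G-S \not\ra (2,3,3)^v$, and let $c$ be a coloring of $G-S$ witnessing this. Write $X_1, X_2, X_3 \subseteq V(G-S)$ for its color classes. Then $X_1$ is an independent set, and $X_2$ and $X_3$ induce $K_3$-free graphs.

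The key step is to merge $S$ with $X_1$. Define a coloring $c'$ of $G$ with classes $S \cup X_1$, $X_2$ and $X_3$.

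We then check that $G[S\cup X_1]$ is $K_3$-free. Both $S$ and $X_1$ are independent sets in $G$, the latter because $G-S$ is an induced subgraph of $G$. Hence $G[S \cup X_1]$ is bipartite with parts $S$ and $X_1$, so it contains no triangle. The classes $X_2$ and $X_3$ are $K_3$-free in $G$ as well, since induced subgraphs are unaffected by adding the vertices of $S$ to the graph.

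So $c'$ is a $3$-coloring of $G$ with no monochromatic $K_3$, which contradicts $G \ra (3,3,3)^v$. This bipartite-merging argument is the same one used in the observation $G \ra (3,3)^v \implies G \ra (2,2,3)^v$. There is no real obstacle here; the only point needing care is that independence of $X_1$ holds in $G$ and not merely in $G-S$, and this follows because $G-S$ is induced.
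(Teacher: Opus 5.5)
Your proof is correct and is essentially the paper's argument: you merge $S$ with the independent class $X_1$ into one bipartite, hence triangle-free, color class, keep $X_2$ and $X_3$ as the other two classes, and so contradict $G \ra (3,3,3)^v$. Your remark that $X_1$ stays independent in $G$ because $G-S$ is an induced subgraph makes explicit a point the paper leaves implicit.
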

\begin{proof}
    Suppose for contradiction that $G - S \not\ra (2,3,3)^v$ and let $c$ be a coloring of $G-S$ witnessing this. Let $X_i \subseteq V(G-S)$ be the vertices colored with color $i$ in $c$.
    Observe that $X_1$ is an independent set.
    Consider the coloring $c'$ of $G$ where the vertices in $X_1 \cup S$, $X_2$, and $X_3$ are colored using the first, second, and third, color, respectively. 
    Since $X_1$ and $S$ are independent sets, $X_1 \cup S$ is bipartite and thereby $K_3$-free. Thus, $c'$ must be a monochromatic $K_3$-free coloring of $G$.
    However, this contradicts the fact that $G \ra (3,3,3)^v$.
\end{proof}

Note that any graph in 
$\mathcal{F}_v(3,3,3;J_6;14)$
must have an independent set, $S$, of size two, otherwise it is a complete graph and contains a $J_6$. By the observation above, if there exists $G \in \mathcal{F}_v(3,3,3;J_6;14)$, then we must have $G-S \in \mathcal{F}_v(2,3,3;J_6;12)$. 
Thus, 
it is sufficient to test the arrowing property of all $J_6$-free $2$-extensions of the seven graphs in $\mathcal{F}_v(2,3,3;J_6;12)$. No arrowing graphs were found and thus $F_v(3,3,3;J_6) \geq 15$.

To find graphs in $\mathcal{F}_v(3,3,3;J_6;15)$, we took a similar approach by considering two cases. If $G \in \mathcal{F}_v(3,3,3;J_6;15)$ has an independent set of size 3, then it must be a 3-extension of the graphs in $\mathcal{F}_v(2,3,3;J_6;12)$. No arrowing graphs were found in this case and thus, $G \in \mathcal{F}_v(3,3,3;J_6;15)$ cannot have an independent set of size 3.
Thus, any such $G$ must be a $\text{Ramsey}(6,3,12)$-graph; a $\text{Ramsey}(s,t,n)$-graph is a graph with $n$ vertices, no clique of size $s$, and no independent set of size $t$. These have already been enumerated by McKay and we obtained them from McKay's online repository\footnote{\url{https://users.cecs.anu.edu.au/~bdm/data/ramsey.html}}, filtered to obtain $J_6$-free graphs and tested for the arrowing property. One graph was found (see Figure~\ref{fig:333j6}), giving us the upper bound $F_v(3,3,3;J_6)\leq 15$.

Note that the entry in Table~\ref{tab:main} is also marked with $\circsym$. This is because the witness graph was also found using our semi-polycirculant graph generator. The details for this are in the next section.

\subsection{Semi-Polycirculant Graph Generator \texorpdfstring{$(\circsym)$}{}}
\label{sec:polygen}

The upper bounds marked with $\circsym$ were found using the semi-polycirculant graph generator described in Section~\ref{sec:polycirc}. 
The number of vertices, block structure, and HoG ID of each witness graph are provided in Table~\ref{tab:semipoly}. We present more details on these entries below.

\begin{table}[t]
\centering
\begin{tabular}{@{}cccc@{}}
\toprule
Folkman Number       & Upper Bound & Block Structure & HoG ID        \\ \midrule
$F_v (2, 2, 3; J_4)$ & 36          & $[8,8,8,8,4]$   & \hogid{51170} \\
$F_v(3,3;J_4)$       & 45          & $[15,15,15]$    & \hogid{51236} \\
$F_v(2,3,3;J_5)$     & 18          & $[6,6,6]$       & \hogid{51278} \\
$F_v(3,3,3;K_4)$     & 51          & $[17,17,17]$    & \hogid{54024} \\
$F_v(3,3,3;J_5)$     & 32          & $[12,12,6,2]$   & \hogid{53117} \\
$F_v(3,3,3;K_5)$     & 21          & $[9,9,3]$       & \hogid{51286} \\
$F_v(3,3,3;J_6)$     & 15          & $[5,5,5]$       & \hogid{51285} \\
$F_v(2,3,3,3;K_5)$   & 32          & $[16,16]$       & \hogid{53087} \\
$F_v(3,3,3,3;K_6)$   & 30          & $[15,15]$       & \hogid{51330} \\ \bottomrule
\end{tabular}
\caption{New upper bounds for Folkman numbers using our semi-polycirculant graph generator.
For each upper bound, we provide the HoG ID of a graph witnessing said bound.
Note that two of the bounds listed here were also found using other methods: (1) $F_v(3,3,3;J_6)$ was also found by the extension method described in Section~\ref{sec:ext}, and (2) $F_v(3,3,3;J_5)$ was also found by modifying special graphs as described in Section~\ref{sec:special}.}
\label{tab:semipoly}
\end{table}

\begin{itemize}
    \item The witness graph $G$ for $F_v(3,3;J_4)$ is locally linear and vertex-transitive. All vertex-transitive graphs up to order 47 were checked and the aforementioned graph was the only one with the desired property.
Moreover, all polycirculant graphs on at most 4 equal blocks with $\alpha < 11$ were checked, and no smaller graphs were found. We note that $G$ has a deeper structure, that can be seen in different ways:
\begin{itemize}
    \item $G$ has an automorphism for which all vertex orbits are triangles. If we remove the edges of those triangles, we obtain the ``halved Foster graph''~\cite{MR1761910}.
    \item Geometrically: $G$ is associated with the partial linear space called $\widetilde{W_2}$, a triple cover of the generalized quadrangle $GQ(2,2)=W_2$.
    Specifically, the vertices of $G$ are the points of the geometry. For every line (which have 3 points), and for every \textit{fiber} (i.e., the equivalent points in the triple cover) we add a triangle in $G$.
    This geometry has several natural constructions, which are described in~\cite{tildeGeometry}.
\end{itemize}
\item The witness graph for $F_v(2,3,3;J_5)$ is a ``triple cover'' of $K_{2,2,2}$.
\item 
For $F_v(3,3,3;K_4)$,
we checked all vertex-transitive graphs with orders between 32 and 47, but there were no Folkman graphs among them with these parameters.
We also verified that none of the $\approx 2\cdot 10^{10}$ $K_4$-free 3-polycirculant graphs with $\alpha < 9$ on 48 vertices arrow $(3,3,3)^v$.

We note that the previous bound on this number was 66~\cite{deng2013upper}, which in turn implied bounds on the numbers $F_v(6,6,6;7) \leq 726$ and $F_e(3,3,3;8)$ $ \leq 727$ using the following Lemma:
\begin{lemma}[\cite{kolev2008multiplicative,deng2013upper}]
    $F_v(6,6,6;7) \leq F_v(2,2,2;3) \times F_v(3,3,3;4)$, and 
    $F_e(3,3,3;8) \leq F_v(6,6,6;7) + 1$.
\end{lemma}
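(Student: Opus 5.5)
The plan is to prove the two inequalities separately. The first comes from a lexicographic-product construction and the second from adding a single universal vertex (a cone).

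\emph{First inequality.} Let $G_1$ be a triangle-free graph with $G_1 \ra (2,2,2)^v$ on $F_v(2,2,2;3)$ vertices. Let $G_2$ be a $K_4$-free graph with $G_2 \ra (3,3,3)^v$ on $F_v(3,3,3;4)$ vertices. Form the lexicographic product $G = G_1[G_2]$: replace each vertex $x$ of $G_1$ by a copy $G_2^{(x)}$ of $G_2$, and join every vertex of $G_2^{(x)}$ to every vertex of $G_2^{(y)}$ whenever $xy \in E(G_1)$. Then $|V(G)| = |V(G_1)|\cdot|V(G_2)|$.

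A clique in $G$ projects onto a clique of $G_1$, so it meets at most $2$ copies, and it meets each copy in a clique of $G_2$. Hence $\omega(G) \le 2\cdot 3 = 6$, and $G$ is $K_7$-free.

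For the arrowing, take any $3$-coloring of $V(G)$. Its restriction to each copy $G_2^{(x)}$ contains a monochromatic triangle $T_x$, because $G_2 \ra (3,3,3)^v$. Color $x \in V(G_1)$ with the color of $T_x$. Since $G_1 \ra (2,2,2)^v$, some edge $xy$ of $G_1$ has both ends in the same color $i$. Then $T_x \cup T_y$ is a $K_6$ of color $i$, because the two copies are completely joined. So $G \ra (6,6,6)^v$, which gives the first inequality.

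\emph{Second inequality.} Let $G$ be a $K_7$-free graph with $G \ra (6,6,6)^v$ on $F_v(6,6,6;7)$ vertices. Let $G^+$ be obtained by adding a vertex $w$ adjacent to all of $V(G)$. Then $\omega(G^+) = \omega(G)+1 \le 7$, so $G^+$ is $K_8$-free.

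Take any $3$-coloring of $E(G^+)$, and color each vertex $u \in V(G)$ with the color of the edge $uw$. The vertex-arrowing gives a set $S$ spanning a $K_6$ in $G$ whose vertices all have the same color $i$. There are two cases.
\begin{itemize}
\item If some edge $ab$ inside $S$ has color $i$, then $w,a,b$ form a triangle of color $i$.
\item Otherwise the edges of the $K_6$ on $S$ use only the two remaining colors. Since $R(3,3)=6$, this $K_6$ contains a monochromatic triangle.
\end{itemize}
In both cases $G^+ \ra (3,3,3)^e$, which gives the second inequality.

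No step here is a serious obstacle. The only point needing care is the clique bound for the product: one has to check that a clique of $G_1[G_2]$ uses at most two blocks and at most three vertices from each block.
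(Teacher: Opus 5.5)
Your proof is correct. The paper gives no proof of its own and only cites Kolev and Deng et al. Your two arguments are the standard ones: the lexicographic product $G_1[G_2]$ is the construction behind Kolev's multiplicative inequality, and the cone over a $(6,6,6)^v$-graph is the three-color analogue of the paper's Observation~\ref{obs:vertexToEdge}, with $R(3,3)=6$ covering the case where the monochromatic $K_6$ has no edge of color $i$.
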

Our result that $F_v(3,3,3;K_4) \leq 51$
implies the following:
\begin{corollary}
    $F_v(6,6,6;7) \leq 561$ and $F_e(3,3,3;8) \leq 562$.
\end{corollary}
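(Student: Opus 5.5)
The corollary is a direct substitution into the preceding Lemma (the multiplicative bound of~\cite{kolev2008multiplicative,deng2013upper}). No new arrowing argument is needed; we only combine known values with the new witness.

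\textbf{First inequality.} The first part of the Lemma gives $F_v(6,6,6;7) \leq F_v(2,2,2;3)\times F_v(3,3,3;4)$.
\begin{itemize}
\item The first factor is a triangle-free Folkman number. As noted in the introduction, $F_v(2^k;3)$ is the smallest order of a triangle-free graph with chromatic number greater than $k$. For $k=3$ this is the smallest triangle-free $4$-chromatic graph, the Grötzsch graph on $11$ vertices. So $F_v(2,2,2;3)=11$, which I would cite from~\cite{DBLP:journals/jgt/Goedgebeur20}.
\item The second factor satisfies $F_v(3,3,3;4)=F_v(3,3,3;K_4)\leq 51$. This is witnessed by the $K_4$-free $[17,17,17]$ semi-polycirculant graph \hogid{54024} from Table~\ref{tab:semipoly}, whose arrowing was verified by the backtracking test of Section~\ref{sec:arrowtest}.
\end{itemize}
Multiplying gives $F_v(6,6,6;7)\leq 11\cdot 51 = 561$.

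\textbf{Second inequality.} The second part of the Lemma gives $F_e(3,3,3;8)\leq F_v(6,6,6;7)+1\leq 562$.

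\textbf{Main obstacle.} The only step with any content is justifying $F_v(2,2,2;3)=11$; only the upper bound $\leq 11$ is needed, supplied by the Grötzsch graph itself. Everything else is taken from the Lemma and the computational witness.
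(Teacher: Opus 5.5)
Your proposal is correct and matches the paper's argument: substitute $F_v(2,2,2;3)=11$ and the new bound $F_v(3,3,3;4)\leq 51$ into the multiplicative lemma to get $11\cdot 51=561$, then add one for the edge Folkman number. Your remark that only upper bounds on both factors are needed is also right; the Gr\"otzsch graph alone suffices for the first factor.
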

\item For $F_v(3,3,3;J_5)$, we note that 
polycirculant graphs with fewer blocks that we tried do not get close to the upper bound we found. 
For example, it took 8 CPU days to generate and check all $\approx 6.6\cdot 10^9$ $J_5$-free 3-polycirculant graphs on 42 vertices with $\alpha \leq 7$, and we verified that none were witness graphs.
\item For $F_v(3,3,3;K_5)$, we note that 
all of the vertex-deleted subgraphs of our witness graph do not arrow $(3,3,3)^v$.

\item Our witness graph for $F_v(3,3,3;J_6)$ can be seen as a five-vertex extension of the complement of the Petersen graph: 
Let $X:=\{1,2,3,4,5\}$, $V_1:=\binom{X}{1}$ and $V_2:=\binom{X}{2}$. \(G=(V_1\cup V_2,E)\), with 
\[
\{v_1,v_2\}\in E \Leftrightarrow
\begin{cases}
    v_1\cap v_2 \neq \emptyset & \text{if } v_1,v_2 \in V_2 \\
    v_1 \cap v_2 = \emptyset   & \text{otherwise}
\end{cases}
\]
It is depicted in Figure~\ref{fig:333j6}.
This graph also arrows $(J_4,J_5)^v$.

\begin{figure}[t]
    \centering
\begin{tikzpicture}[scale=0.5]
	\draw (-1.17557050458495, -1.61803398874989) coordinate (x0);
	\draw (-1.90211303259031, 0.618033988749895) coordinate (x4);
	\draw (0.000000000000000, 2.00000000000000) coordinate (x3);
	\draw (1.17557050458495, -1.61803398874989) coordinate (x1);
	\draw (1.90211303259031, 0.618033988749895) coordinate (x2);
	\draw (-7.05342302750968, -9.70820393249937) coordinate (x5);
	\draw (-11.4126781955418, 3.70820393249937) coordinate (x9);
	\draw (5.29006727063226, 7.28115294937453) coordinate (x13);
	\draw (11.4126781955418, 3.70820393249937) coordinate (x7);
	\draw (8.55950864665638, -2.78115294937453) coordinate (x12);
	\draw (-8.55950864665638, -2.78115294937453) coordinate (x10);
	\draw (0.000000000000000, -9.00000000000000) coordinate (x11);
	\draw (0.000000000000000, 12.0000000000000) coordinate (x8);
	\draw (7.05342302750968, -9.70820393249937) coordinate (x6);
	\draw (-5.29006727063226, 7.28115294937453) coordinate (x14);

	\draw[edge_color1] (x5)--(x1)--(x8)--(x0)--(x7)--(x3)--(x5)--(x2)--(x6)--(x3)--(x11)--(x0)--(x9)--(x1)--(x10)--(x2)--(x9);
	\draw[edge_color1] (x6)--(x4)--(x7);
	\draw[edge_color1] (x8)--(x4)--(x10);
	\draw[edge_color1] (x11)--(x2)--(x13)--(x0)--(x14)--(x1)--(x12)--(x3)--(x14);
	\draw[edge_color1] (x12)--(x4)--(x13);
	
	\draw[edge_color2] (x10)--(x12)--(x14)--(x11)--(x13)--(x10)--(x5)--(x11)--(x6)--(x10)--(x8)--(x12)--(x5)--(x14)--(x7)--(x11)--(x9)--(x10);
	\draw[edge_color2] (x12)--(x6)--(x13)--(x7)--(x12);
	\draw[edge_color2] (x13)--(x8)--(x14)--(x9)--(x13);
	\draw[edge_color2] (x5)--(x6)--(x7)--(x8)--(x9)--(x5);
	
	\draw[edge_color3] (x0)--(x1)--(x2)--(x0)--(x3)--(x1)--(x4)--(x0);
	\draw[edge_color3] (x2)--(x3)--(x4)--(x2);
	
	\draw (x0) node[unlabeled_vertex]{}; \draw (x1) node[unlabeled_vertex]{}; \draw (x2) node[unlabeled_vertex]{}; \draw (x3) node[unlabeled_vertex]{}; \draw (x4) node[unlabeled_vertex]{}; \draw (x5) node[unlabeled_vertex]{}; \draw (x6) node[unlabeled_vertex]{}; \draw (x7) node[unlabeled_vertex]{}; \draw (x8) node[unlabeled_vertex]{}; \draw (x9) node[unlabeled_vertex]{}; \draw (x10) node[unlabeled_vertex]{}; \draw (x11) node[unlabeled_vertex]{}; \draw (x12) node[unlabeled_vertex]{}; \draw (x13) node[unlabeled_vertex]{}; \draw (x14) node[unlabeled_vertex]{};
	
\end{tikzpicture}
\caption{The unique graph in $\mathcal{F}_v(3,3,3;J_6;15)$ discussed in Sections~\ref{sec:ext} and~\ref{sec:polygen}. The colors are for clarity only: in blue is the complement of the Petersen graph, in green is a $K_5$, in red are the connections between the two subgraphs.}
\label{fig:333j6}

\end{figure}

\item For $F_v(2,3,3,3;K_5)$, we note that there are no 4-polycirculant graphs on 28 vertices with $\alpha < 5$, and no 3-polycirculant on 30 vertices with $\alpha < 6$ with the arrowing property.

\item The witness graph for $F_v(3,3,3,3;K_6)$ has chromatic number equal to $10$, which is is higher than strictly required.
The smallest vertex-transitive graph with this property has 31 vertices.
This result cannot be improved with 2- or 3-polycirculant graphs with $\alpha < 6$.
\end{itemize}

\subsection{Locally Linear Graph Generator \texorpdfstring{$(\llsym)$}{}}
\label{sec:llgen}

Using the locally linear graph generator described in Section~\ref{sec:locallin}, we were able to generate all locally linear graphs on up to 22 vertices, and all maximally locally linear graphs on 23 vertices. Recall that a locally linear graph is $J_4$-free and each edge belongs to a $K_3$. Moreover, if $G \ra (3,3)^v$ and $e \in E(G)$ does not belong to a $K_3$, then $G - e \ra (3,3)^v$. Thus, if there exists a graph in $\mathcal{F}_v(3,3;J_4;n)$, then there exists a maximally locally linear graph in 
$\mathcal{F}_v(3,3;J_4;n)$.
We checked the arrowing property for each maximally locally linear graph on $23$ vertices and found no arrowing graph, giving us that $F_v(3,3;J_4) \geq 24$.

For a lower bound on $F_v(3,3;C_4)$, we modified our generator to produce $C_4$-free graphs. This allowed to generate all $C_4$-free locally linear graphs on up to 28 vertices, and all $C_4$-free maximally locally linear graphs on 29 vertices. We checked the arrowing property for each such graph on 29 vertices and found no arrowing graph, giving us that $F_v(3,3;C_4) \geq 30$.

\subsection{Modifying Special Graphs $(\modsym)$}
\label{sec:special}

In this section we discuss new upper bounds found by modifying some special graphs that are already known in the literature. 

\subsubsection{ \texorpdfstring{ $F_v(3,3,3;J_5) \leq 32$ }{ Fv(3,3,3;J5) <= 32 }}
\label{mod:v333j5}

We started from the graph 
\(TO^{-}(6,2) \)~\cite{brouwer2022strongly}.
 \(TO^{-}(6,2) \) is a vertex- and edge-transitive graph of order 56.
The neighborhood of each vertex is isomorphic to the complement of the Schl\"afli graph (\(=O^{-}(6,2)\)). It is the only connected graph with this property, together with $VO^{-}(6,2)$, which is the unique extremal Ramsey $(J_5,J_7)$-graph~\cite{DBLP:journals/dam/GoedgebeurO22}.
\(TO^{-}(6,2)\) is also isomorphic to the complement of the \textit{Gosset graph} with one perfect matching removed.

This graph arrows $(3,3,3)^v$ and is $J_5$-free.
For each vertex $v \in V(G)$, we obtained the graph $G - v$, removing any isomorphs, and checking if the arrowing property still holds. This property was iteratively repeated on all smaller graphs until no more arrowing graphs were found.

This process could be repeated 24 times, requiring to check the arrowing property of around one million graphs, and resulting in exactly one graph of order 32.
This graph is semi-polycirculant with block structure [12,12,6,2].
It does not arrow $(3,3)^e$. Its HoG ID is \hogid{53117}.

\subsubsection{ \texorpdfstring{ $F_e(3,3;J_5) \leq 43$ }{ Fe(3,3;J5) <= 43 }} \label{ss:33J5}

We started from the graph from the previous section: \(TO^{-}(6,2) \).
By transforming \((K_3,K_3)\)-arrowing to \textit{3-SAT} and using the \textit{glucose} SAT-solver, we were able to show that \(TO^{-}(6,2) \rightarrow (3,3)^e \).

It was observed that up to 13 vertices from \(TO^{-}(6,2)\) can be removed so that the arrowing property still holds. This was done by first removing a maximum independent set (7 vertices), and then greedily removing 6 more vertices. The resulting graph $G$ has $43$ vertices, 440 edges, 1093 triangles, \(\chi(G)=7\), \(\alpha(G)=7\), \(|Aut(G)|=48\). All of its vertex-deleted subgraphs are non-arrowing.
The HoG ID of our witness graph is \hogid{51171}.
We note that the previous upper bound for $F_e(3,3;J_5)$ was 136 \cite{HassanJNRX23}, and now we have 43.

Other attempts to find smaller examples have so far failed. For example, all vertex-transitive graphs up to $48$ vertices were enumerated by Royle and Holt~\cite{DBLP:journals/jsc/HoltR20}. We verified using the same methods as above that no \(J_5\)-free vertex-transitive graph on 36, 39, 40, 42 or 44 vertices arrows \((3,3)^e\).

\subsubsection{ \texorpdfstring{ $F_v(3,3;C_4) \leq 63$ }{ Fv(3,3;C4) <= 63 }}
\label{sec:fv33c4}

We explored a family of graph based on projective planes~\cite{MR2014512}.
Take a projective plane of order $q$, where the points $P$ are labeled by triples over $\mathbb{F}_q$.  Define the \textit{polarity graph} $G_q$ as the graph $(V, E)$ where $V=\{v\in P\ \vert\ v\cdot v \neq 0\}$ and $E=\{\{v,w\} \vert \  v\cdot w = 0 \}$, using the standard inner product. $G_q$ is a locally linear $C_4$-free graph.
This graph can also be constructed as \(V=\mathbb{F}_8^2\setminus (0,0) \) and 

\[(x_1,y_1)\sim (x_2,y_2) \Leftrightarrow  \begin{vmatrix}
	x_1 & y_1 \\ x_2 & y_2
\end{vmatrix} = 1. \]

Then it is easy to see that the only common neighbor of \((x_1,y_1)\) and \((x_2,y_2) \) is \((x_1+x_2,y_1+y_2)\).
 
It was observed that $G_8\rightarrow (3,3)^v$, with $|V(G_8)|=63$, $\chi(G_8) = 5$ and $\alpha(G_8)=15$. $G_8$ is vertex-transitive. Every vertex-deleted subgraph does not arrow. 
The graph is not MLL. Its HoG ID is \hogid{51177}.

\subsubsection{ \texorpdfstring{ $F_e(3,3;W_5) \leq 64$ }{ Fe(3,3;W5) <= 64 }}

In the following, we formally explain an interesting link between vertex and edge Folkman numbers that avoid monochromatic $K_3$'s. This link
allowed us to obtain some new bounds for Folkman numbers.
While this observation has been noted in the literature before, we provide a formal proof for it below.

\begin{observation}
    Suppose that $G$ is a graph and $G'$ is the graph constructed by adding a vertex $v$ to $G$ and adding an edge $(u,v)$ to every $u \in V(G)$.
    If $G \ra (3,3)^v$, then $G' \ra (3,3)^e$.
    \label{obs:vertexToEdge}
\end{observation}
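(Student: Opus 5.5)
We argue by contraposition: from any edge $2$-coloring of $G'$ with no monochromatic triangle we build a vertex $2$-coloring of $G$ with no monochromatic triangle. This contradicts $G \ra (3,3)^v$.

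Fix a good edge coloring $c$ of $G'$ with colors red and blue. Every vertex $u \in V(G)$ is joined to the new vertex $v$ by exactly one edge $(u,v)$. Define a vertex coloring $c'$ of $G$ by $c'(u) := c((u,v))$, so each vertex of $G$ inherits the color of its spoke to the apex $v$.

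Suppose $c'$ contains a monochromatic triangle $\{x,y,z\}$ in $G$, say red. Then the spokes $(x,v)$, $(y,v)$ and $(z,v)$ are all red under $c$. Look at the edge $xy$ of $G$, which is also an edge of $G'$.
\begin{itemize}
\item If $c(xy)$ is red, then $\{x,y,v\}$ is a red triangle in $G'$.
\item Hence $c(xy)$ is blue, and by the same reasoning so are $c(yz)$ and $c(xz)$.
\item But then $\{x,y,z\}$ is a blue triangle in $G'$.
\end{itemize}
Either way $c$ has a monochromatic triangle, a contradiction. The blue case is symmetric.

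So $c'$ is a vertex $2$-coloring of $G$ with no monochromatic $K_3$, i.e.\ $G \not\ra (3,3)^v$. This completes the contrapositive.

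There is no real obstacle. The only point needing care is that the triangle's three edges must be handled simultaneously: a single red edge already closes a red triangle with $v$, so all three must be blue. This forces the blue triangle.
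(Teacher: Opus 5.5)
Your proof is correct and follows the same approach as the paper. The paper also colors each vertex of $G$ by the color of its edge to $v$ (its sets $R$ and $B$). It notes that every edge inside one class must take the opposite color, so a monochromatic vertex triangle would give a monochromatic edge triangle.
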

\begin{proof}
    We prove via contradiction. Suppose $G' \not\ra (3,3)^e$ and consider an edge-coloring, $c$, of $G'$ with no red and no blue $K_3$'s. 
    Let $R \subseteq V(G')$ (resp., $B \subseteq V(G'))$ be the set of vertices in $V(G')$ that $v$ shares a red (resp., blue) edge with in $c$.

    Observe that any edge between two vertices $u,w \in R$ must be blue; otherwise $(u,v)$, $(w,v)$, and $(u,w)$ form a red $K_3$ in $c$. Since $c$ is a good coloring, it also follows that $R$ is $K_3$-free. A similar argument shows that $B$ is also $K_3$-free. Consider the vertex-coloring $c'$ of $G$ where each vertex in $R$ is colored red and each vertex in $B$ is colored blue. Since $R$ and $B$ are $K_3$-free, there is no red and no blue $K_3$ in either subset of vertices. Any $K_3$ that uses vertices from both sets is not monochromatic. 
    Thus, $c'$ is a vertex coloring of $G$ with no red and no blue $K_3$'s.
    However, this contradicts the fact that $G \ra (3,3)^v$.
\end{proof}

\begin{observation}
    If $G$ is a $C_4$-free graph, then $G$ $+$ $v$ is $W_5$-free.
\end{observation}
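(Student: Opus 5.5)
The plan is to argue by contraposition: assume $G+v$ contains a copy of $W_5$ and find a $C_4$ inside $G$. Here $G+v$ is the graph of Observation~\ref{obs:vertexToEdge}, obtained by adding one vertex $v$ adjacent to every vertex of $G$. I take $W_5$ to mean the wheel on five vertices, a hub joined to all four vertices of a $4$-cycle (the rim). The key structural facts are these. $W_5$ contains several $4$-cycles: the rim itself, and also cycles through the hub. Only the rim avoids the hub. Meanwhile, all edges of $G+v$ that are not in $G$ are incident to $v$.

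Fix an embedding of $W_5$ into $G+v$, with hub $h$ and rim cycle $r_1r_2r_3r_4$. I split into two cases according to where $v$ lands.

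\emph{Case 1: $v$ is not an image vertex.} Then the whole copy of $W_5$ lies in $G$. In particular $G$ contains the rim $4$-cycle, which contradicts $C_4$-freeness of $G$.

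\emph{Case 2: $v$ is an image vertex.} If $v=h$, the rim $r_1r_2r_3r_4$ avoids $v$. It is therefore a $4$-cycle with all edges in $G$, again a contradiction. Otherwise $v$ is a rim vertex, say $v=r_1$. Then I use the other $4$-cycle of the wheel that avoids $r_1$, namely $h\,r_2\,r_3\,r_4\,h$. Its edges $hr_2$, $r_2r_3$, $r_3r_4$ and $r_4h$ are all wheel edges not incident to $r_1=v$, so they lie in $G$, giving a $C_4$ in $G$.

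In every case $G$ contains $C_4$, so $G+v$ is $W_5$-free. There is no real obstacle here. The only step needing care is the observation in Case 2 that deleting any single vertex of $W_5$ still leaves a $4$-cycle: deleting the hub leaves the rim, and deleting a rim vertex leaves the hub together with the remaining three rim vertices, which form a $4$-cycle. Equivalently, $W_5-x\supseteq C_4$ for every vertex $x$, which disposes of all cases uniformly.
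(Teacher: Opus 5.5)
Your proof is correct and follows essentially the same argument as the paper: a copy of $W_5$ in $G+v$ must use $v$, since otherwise its rim is a $C_4$ in $G$, and deleting any single vertex of $W_5$ (hub or rim vertex) still leaves a $C_4$, which then lies entirely in $G$. Your case split on whether $v$ is the hub or a rim vertex just spells out that fact explicitly.
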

\begin{proof}
    If $G$ $+$ $v$ contains a copy of $W_5$, it would have to include $v$. But the removal of any vertex in $W_5$ always leaves a $C_4$, which would then be completely inside $G$.
\end{proof}

Therefore, if we add a vertex connected to all other vertices of a graph in $\mathcal{F}_v(3,3;C_4;n)$, we obtain a graph in $\mathcal{F}_e(3,3;W_5;n+1)$.
Using the bound from Section~\ref{sec:fv33c4} gives us $F_e(3,3;W_5) \leq F_v(3,3;C_4) + 1 \leq 64$.

We note that the existence of a $W_5$-free graph that arrows $(3,3)^e$ was an open problem~\cite{HassanJNRX23}.
Now the only 5-vertex graph $H$ for which the existence of $F_e(3,3;H)$ is unknown is $\ptt$.
This case cannot be solved with the same methodology, since it would require \((3,3)\)-vertex arrowing of a graph without $K_3+e$, which is impossible.

\section{Proof of Theorem~\ref{thm:ptt}}
\label{sec:ptt}

In this section we restate our theorem about $F_e(3,3;\ptt)$ and provide proofs of our claims.

\pttThm*

\begin{proof}
    Let $G$ be a $\ptt$-free graph that arrows $(3,3)^e$ such that for any edge $e \in E(G)$, $G -e \not\ra (3,3)^e$, i.e., $G$ is minimal.

\begin{enumerate}
        \item 
                Suppose $G$ has a $K_4$, and let $X$ be the vertices of said $K_4$. Let $v \in V(G) - X$. If $v$ is adjacent to two vertices in $X$, then a $\ptt$ is formed. Thus, for all $v \in V(G) - X$, $v$ is adjacent to at most one vertex in $X$. 
        In the context of $(3,3)^e$ arrowing, this means that the edges in $X$ can be colored independently of the other edges. Therefore,
        we could remove them from $G$ without changing whether it arrows $(3,3)^e$, contradicting $G$'s minimality.
        
        \item 
                Let $e \in E(G)$.
        By minimality, $G-e \not\ra(3,3)^e$. 
        If $e$ is in at most one triangle, then
        it is easy to see that any good coloring of $G-e$ can be extended to a good coloring of $G$.

        \item 
                If $G[N(u)]$ contains a $K_3$, $G$ contains a $K_4$, contradicting property 1.
        If $G[N(u)]$ contains a $C_4$, $G$ contains $\ptt$.
        \item \textit{Suppose $u$ is a vertex in $G$. Then, for each edge $e$ in $G[N(u)]$, there exists a vertex $v \in X = V(G) - N(u) - \{u\}$ that both of $e$’s endpoints are adjacent to.}
        This follows from (2) and (3).
        If $v$ was connected to both edges in $G[N(u)]$ then this would form a $\ptt$.
        
        \item 
Consider a minimal $\ptt$-free graph $G$ that arrows $(3,3)^e$, and an arbitrary vertex $v$ in $G$. If every 2-coloring of $G[N(v)]$ without monochromatic triangles can be extended to a 2-coloring of $G[N(v) \cup \{v\}]$ without creating a monochromatic triangle, then $v$ is a redundant vertex.

In order to find a lower bound on the minimal degree of a minimal $\ptt$-free, $(3,3)^e$-arrowing graph $G$, we executed the following procedure with a computer:
\begin{itemize}
    \item Consider each graph $H$ on $n$ vertices that does not contain $K_3$, and for which adding a universal vertex would not create a copy of $\ptt$.
    \item For each such graph, consider every possible 2-coloring of its edges.
    \item For each such coloring, check if it can be extended with one extra vertex so that there are no monochromatic triangles.
\end{itemize}

The smallest graph for which a coloring exists that cannot be extended has 8 vertices. It is a graph obtained from $K_4$, where the edges of one 4-cycle have all been subdivided by a vertex. \qedhere
    \end{enumerate}
\end{proof}

\section{Conclusion}
\label{sec:conclude}

We presented many new bounds on Folkman numbers arrowing $K_2$ and $K_3$ while avoiding special graphs on up to six vertices. Notably, a majority of our new upper bounds are from generating witness graphs using our semi-polycirculant graph generator, showcasing its efficacy for finding witness Folkman graphs. 

Our work showed the existence of $F_e(3,3;W_5)$, leaving only one case open for the existence of $F_e(3,3;H)$ where $H$ is a graph on five vertices, $H = \ptt$. For this case we provided some properties of the minimal Folkman witness graph. We close our paper with the following open question:

\begin{problem}
    Does $F_e(3,3;\ptt)$ exist?
\end{problem}

\bibliographystyle{alpha}
\bibliography{references}

\end{document}